\documentclass[10pt]{amsart}

\textwidth=6.7in
\oddsidemargin=-0.1in
\evensidemargin=-0.1in
\baselineskip=16pt

\makeatletter
\def\LaTeX{\leavevmode L\raise.42ex
    \hbox{\kern-.3em\size{\sfsize}{0pt}\selectfont A}\kern-.15em\TeX}
\makeatother

\newcommand{\BibTeX}{{\rm B\kern-.05em{\sc
          i\kern-.025emb}\kern-.08em\TeX}}

\makeatletter
\label{e:dispaa}
\label{e:dispau}
\label{e:dispav}
\label{e:dispaw}
\label{e:dispax}
\makeatother

\sloppy
\addtolength{\textwidth}{1cm}

\setlength{\baselineskip}{0,5cm}
\setlength{\oddsidemargin}{1\marginparsep}
\setlength{\topmargin}{-0,5\footskip}
\setlength{\textwidth}{14,2cm}
\setlength{\textheight}{46\baselineskip}

\newtheorem{theorem}{Theorem}

\newtheorem{proposition}[theorem]{Proposition}
\newtheorem{lemma}[theorem]{Lemma}

\numberwithin{theorem}{section}
\numberwithin{equation}{section}

\usepackage{latexsym}
\usepackage{amsmath}
\usepackage{amsfonts}
\usepackage{amssymb}
\usepackage{amsthm}
\usepackage{cite}

\newcommand{\R}{\mathbb{R}}

\newcommand{\oO}{\overline{\Omega}}

\newcommand{\N}{\mathbb{N}}


\newcommand{\cA}{{\mathcal A}}   
   
\newcommand{\cC}{{\mathcal C}}

\newcommand{\cL}{{\mathcal L}}

\newcommand{\cS}{{\mathcal S}}

\newcommand{\mR}{{\mathbf R}}      
\newcommand{\mK}{{\mathbf K}}

\newcommand{\dist}{{\rm dist}}
\newcommand{\supp}{{\rm supp}}

\newcommand{\eps}{\varepsilon}

\renewcommand{\phi}{\varphi}

\def\eps{\varepsilon}

\begin{document}

\title[Branch continuation for the nonlinear Schr{\"o}dinger equation]{Branch continuation inside 
the essential spectrum for the nonlinear Schr\"odinger equation}
\author{Gilles Ev\'equoz}
\email{evequoz@math.uni-frankfurt.de}
\author{Tobias Weth}
\email{weth@math.uni-frankfurt.de}
\address{ Institut f\"ur Mathematik, Johann Wolfgang Goethe-Universit\"at,
Robert-Mayer-Str. 10, 60054 Frankfurt am Main, Germany}

\begin{abstract}
We consider the nonlinear stationary Schr{\"o}dinger equation   
\begin{equation*}
 -\Delta u -\lambda u=  Q(x)|u|^{p-2}u,  \qquad \text{in }\mathbb{R}^N
\end{equation*}
in the case where $N \ge 3$, $p$ is a superlinear, subcritical exponent, $Q$ is a bounded, nonnegative and nontrivial weight function with compact support in $\mathbb{R}^N$ and $\lambda \in \mathbb{R}$ is a parameter. Under further restrictions either on the exponent $p$ or on the shape of $Q$, we establish the existence of a continuous branch $\mathcal{C}$ of nontrivial solutions to this equation which intersects $\{\lambda \} \times L^{s}(\mathbb{R}^N)$ for every $\lambda \in (-\infty, \lambda_Q)$ and $s> \frac{2N}{N-1}$. Here $\lambda_Q>0$ is an explicit positive constant which only depends on $N$ and $\text{diam}(\text{supp }Q)$. In particular, the set of values $\lambda$ along the branch enters the essential spectrum of the operator $-\Delta$.  
\end{abstract}

\dedicatory{Dedicated to Paul H. Rabinowitz with admiration and appreciation}

\maketitle

\section{Introduction and main result}
\label{sec:intr-main-result} 
The present paper is concerned with nonlinear Schr{\"o}dinger equations of the type 
\begin{equation}
  \label{eq:eq-general-schr}
 -\Delta u + V(x)u - Q(x)|u|^{p-2}u = \lambda u, \qquad x \in \R^N,
\end{equation}
where $p>2$, $V \in L^\infty(\R^N)$ is a linear external potential, $Q \in L^\infty(\R^N)$ is a nonnegative weight function and $\lambda \in \R$ is a parameter. Within the last four decades, there has been a huge amount of work on equations of this form, whereas the majority of papers is devoted to the existence and multiplicity of weak solutions $u \in H^1(\R^N)$ for fixed $\lambda$ satisfying the assumption 
\begin{equation}
  \label{eq:main-assumption}
\lambda < \lambda_{ess}:= \inf \sigma_{ess}(-\Delta + V),  
\end{equation}
where $\sigma_{ess}(-\Delta + V)$ denotes the essential spectrum of the Schr{\"o}dinger operator $-\Delta +V$. In particular, we wish to point out the classical papers \cite{rabinowitz:92,berestycki-lions83,berestycki-lions83b,floer.weinstein:86,ding.ni:86}. 
Our present paper is motivated by a somewhat different viewpoint focusing on properties of $\lambda$-dependent solution families for $\lambda$ close to $\lambda_{ess}$. Starting with the pioneering works of Stuart \cite{stuart:82,stuart:80,stuart:80-0}, the role of $\lambda_{ess}$ as a bifurcation point for solutions of \eqref{eq:eq-general-schr} has been studied extensively in the past.  For $s \ge 1$, we will call the value $\lambda_{ess} \in \R$ an $L^{s}$-{\em bifurcation point} for \eqref{eq:eq-general-schr} if there is a sequence $(\lambda_k,u_k)_k$ in $(-\infty,\lambda_{ess}) \times L^{s}(\R^N)$ such that $u_k$ solves \eqref{eq:eq-general-schr} with $\lambda=\lambda_k$ for every $k \in \N$ and $(\lambda_k, u_k) \to (\lambda_{ess},0)$ in $\R \times L^s(\R^N)$. Necessary and sufficient  conditions on $Q$, $p$ and $s$ for  
$L^s$-bifurcation at $\lambda_{ess}$ were established e.g. in 
\cite{AB98,stuart:88,Ro89,ZZ88,Ca95}. For further results, we refer the reader to the illuminating survey \cite{stuart97}. 

The most general results on bifurcation sequences converging to 
 $(\lambda_{ess},0) \in \R \times L^s(\R^N)$ were 
obtained by variational methods. On the other hand, purely variational arguments  do not give rise to continua of solutions in $\R \times L^s(\R^N)$ emanating from $(\lambda_{ess},0)$.
Consequently, several different arguments have been applied in the past  to construct continua of this type. We mention in particular bounded domain approximation \cite{toland:1984}, shooting methods \cite{weth05}, rescaling arguments combined with the implicit function theorem  \cite{stuart:85} and generalizations of the Poincar\'e-Melnikov method 
\cite{magnus:88, stuart:89,AB98,AB98.1,BP04}. 
These methods rely on rather specific additional assumptions, and no general abstract method is available for the study of continuous bifurcation 
branches emanating from points in the essential spectrum 
of a linear operator. In contrast, the global branching properties of the pure point 
spectrum of an operator with compact resolvent have been analyzed in great detail by means of a
degree theoretic approach, which has been introduced by Rabinowitz in his classical paper \cite{rabinowitz71} and has been developed further e.g. by Dancer in \cite{dancer:73,dancer:2002}. 

The present paper is concerned with continua of solutions in the case where $N \ge 3$, $V \equiv 0$, $Q \in L^\infty(\R^N)$ is a nonnegative function with compact support
and $p$ is subcritical, i.e., $2<p<2^*:= \frac{2N}{N-2}$. In this case, \eqref{eq:eq-general-schr} reduces to 
\begin{equation}
  \label{eq:eq-general-schr-1}
 -\Delta u -\lambda u=  Q(x)|u|^{p-2}u,  \qquad x \in \R^N,
\end{equation}
and it follows from \cite[p.~526--527]{stuart:88} that $\lambda_{ess}= 0$ is no $L^s(\R^N)$-bifurcation point for \eqref{eq:eq-general-schr-1} for $s \ge \max\{1,\frac{N(p-2)}{2}\}$.  Nevertheless, we shall see that, for a range of subcritical exponents $p$, there exists solution branches which can be continued to values $\lambda>0$, i.e., to values inside $\sigma_{ess}(-\Delta)$. For $\lambda>0$, these solutions are not in $H^1(\R^N)$, but they remain in $L^s(\R^N)$ for $s>\frac{2N}{N-1}$, and the branches will be continuous with respect to the $L^s$-norm.

To state our main result, we introduce the following notation. If $E$ is a normed vector space and $I \subset \R$ is an interval, a connected subset $\cC$ of $I \times E$ will be called a 
{\em global branch in $I \times E$} if $\cC$ intersects $\{t\} \times E$ nontrivially for every $t \in I$. 
For $1\leq s\leq \infty$ we denote by $L^s_c(\R^N)$ the (equivalence class of) functions in $L^s(\R^N)$ having compact support in $\R^N$. Moreover, for $N \ge 3$, we denote by $2^*:= \frac{2N}{N-2}$ the critical Sobolev exponent. 
In our main result, the following (alternative) assumptions will be used. 
\begin{itemize}
\item[(A1)] $2<p< \frac{2^*}{2}+1 = \frac{2(N-1)}{N-2}$, and $Q \in L^\infty_c(\R^N) \setminus \{0\}$ is a nonnegative function.
\item[(A2)] $2<p< 2^*$, and  $Q \in L^\infty_c(\R^N)$ has the form 
\begin{equation}\label{eqn:Q_dist-assumption}
Q(x)= \beta(x) \text{dist}(x, \R^N \! \setminus \!\Omega)^\alpha \qquad \text{ for all $x\in \R^N$,}
\end{equation}
with some $\alpha >0$, a bounded open set $\Omega$ of class $C^1$ and a positive function $\beta \in  C(\R^N)$. 
\end{itemize}

We remark that in assumption (A2) it clearly suffices to assume that $\beta$ is continuous on $\overline \Omega$, since then it can be extended arbitrarily to a function $\beta \in C(\R^N)$ and \eqref{eqn:Q_dist-assumption} still holds. 

\begin{theorem}
\label{main-theorem}
Suppose that  $N \ge 3$, and that $(A1)$ or $(A2)$ holds. Then there exists $\lambda_Q>0$ -- depending only on $N$ and $\text{diam}(\text{supp }Q)$ -- such that, for any $\Lambda^* \in (0,\lambda_Q)$, \eqref{eq:eq-general-schr-1} admits  a global branch $\cC$ of nontrivial strong solutions in $(-\infty,\Lambda^*] \times L^s(\R^N)$ for all $s>\frac{2N}{N-1}$. Moreover, if $(\lambda,u) \in \cC$, then we have:
\begin{itemize}
\item[(i)] $u \in C^{1,\gamma}_{loc}(\R^N)$ for all $\gamma \in (0,1)$;  
\item[(ii)] If $\lambda \le 0$, then $u$ is positive on $\R^N$ ;
\item[(iii)] If $\lambda > 0$, then $u$ is positive on $\text{supp }Q$ and changes sign in $\R^N \setminus \text{supp }Q$.
\end{itemize}
Explicitly, $\lambda_Q$ is given by 
$$
\lambda_Q = \left(\frac{y_{\frac{N-2}{2}}^{(1)}}{\text{diam}(\text{supp }Q)}\right)^2,
$$
where $y_{\frac{N-2}{2}}^{(1)}$ is the first positive zero of $Y_{\frac{N-2}{2}}$, the Bessel function of the second kind of order $\frac{N-2}{2}$. 
\end{theorem}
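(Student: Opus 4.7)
The plan is to recast equation \eqref{eq:eq-general-schr-1} as a fixed-point equation $u = K_\lambda u$ in $L^s(\R^N)$, where
\[
K_\lambda u(x) := \int_{\R^N} \Phi_\lambda(x-y)\, Q(y)|u(y)|^{p-2}u(y)\, dy
\]
and $\Phi_\lambda$ is the real, radially symmetric fundamental solution of $-\Delta - \lambda$ on $\R^N$. For $\lambda<0$ this is the Bessel (Yukawa) potential, while for $\lambda>0$ it has, up to a constant, the form $|x|^{-(N-2)/2}\, Y_{(N-2)/2}(\sqrt{\lambda}\,|x|)$. The precise value of $\lambda_Q$ is dictated by this second formula: the condition $\lambda<\lambda_Q$ is equivalent to $\Phi_\lambda$ being strictly positive on the ball of radius $\text{diam}(\text{supp }Q)$, which is exactly the effective range of the interaction. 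This sign property is what will let the continuation argument survive the crossing of $\lambda_{ess}=0$.

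The next step is to show that $K_\lambda\colon L^s(\R^N)\to L^s(\R^N)$ is completely continuous and depends continuously on $\lambda$, for every $s>2N/(N-1)$. Since $Q$ has compact support, $Q|u|^{p-2}u$ is compactly supported, and the threshold $s=2N/(N-1)$ emerges as the natural Agmon-type exponent for the map $L^{s'}_c\ni f\mapsto \Phi_\lambda * f\in L^s(\R^N)$. Compactness follows by combining a Rellich-type estimate on a large ball with the decay of $\Phi_\lambda$ at infinity.

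The crux of the argument is to produce uniform a priori bounds: for every $\lambda_0<0$ and every $\Lambda^*\in(0,\lambda_Q)$ there should exist $M=M(\lambda_0,\Lambda^*)$ such that any nontrivial fixed point $u$ of $K_\lambda$ with $\lambda\in[\lambda_0,\Lambda^*]$ satisfies $\|u\|_{L^s}\le M$. Testing the integral equation against $Q|u|^{p-2}u$, exploiting the strict positivity of $\Phi_\lambda$ on $\text{supp }Q$ guaranteed by $\Lambda^*<\lambda_Q$, and combining with a blow-up analysis \`a la Gidas--Spruck reduces any hypothetical blowing sequence to either an entire solution of a limit Helmholtz equation on $\R^N$ or to a limit problem on a half-space. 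Under (A1), the subcritical range $p<2(N-1)/(N-2)$ is precisely what rules out the first scenario via a Liouville-type theorem; under (A2), the vanishing profile $\text{dist}(\cdot,\R^N\setminus\Omega)^\alpha$ of $Q$ across $\partial\Omega$ rules out the second via a Pohozaev-type identity. I expect this step to be the main obstacle, as it couples the spectral threshold $\lambda_Q$, the exponent restriction, and the geometric structure of $Q$ in a delicate way.

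With compactness and a priori bounds in place, I would produce a starting solution $u_0$ at some very negative $\lambda_0$ by standard mountain-pass arguments in $H^1(\R^N)\embed L^s(\R^N)$, and check that it has nonzero Leray--Schauder index, for example by homotopy to a nondegenerate ground state of a model problem. A Rabinowitz-type global continuation theorem applied to $F(\lambda,u):=u-K_\lambda u$ then produces a connected set $\cC$ of nontrivial solutions containing $(\lambda_0,u_0)$. The a priori bounds prevent $\cC$ from escaping to $\|u\|_{L^s}=\infty$, while the fact that $0$ is not an $L^s$-bifurcation point on $(-\infty,0)$ prevents $\cC$ from collapsing onto the trivial branch; hence $\cC$ must project onto all of $(-\infty,\Lambda^*]$. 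Property (i) is standard elliptic bootstrap since $Q$ has compact support; (ii) follows from the strong maximum principle applied to the coercive operator $-\Delta-\lambda$ for $\lambda\le 0$; and (iii) follows from positivity on $\text{supp }Q$ by a local maximum principle, together with the oscillatory behavior of $\Phi_\lambda$ for $\lambda>0$, which forces sign changes outside $\text{supp }Q$ since a non-sign-changing $u$ at infinity would put $u\in L^2(\R^N)$, impossible as $-\Delta$ has no $L^2$ eigenvalues.
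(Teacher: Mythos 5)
Your overall skeleton — rewrite \eqref{eq:eq-general-schr-1} as a fixed-point equation with the real, standing-wave fundamental solution $\Psi_\lambda$, prove compactness of the convolution operator, derive uniform $L^\infty$ a priori bounds on the positive class, and run a Leray--Schauder continuation — matches the paper's strategy, and your identification of $\lambda_Q$ as a positivity threshold for $\Psi_\lambda$ and of $\frac{2N}{N-1}$ as the Agmon-type endpoint is correct. However, the crux step, the a priori bounds, is wrong in both branches. Under $(A1)$ you propose a Gidas--Spruck blow-up, but for a \emph{general} nonnegative $Q\in L^\infty_c$ there is no control on the profile of the rescaled weight near $\partial\Omega$, so the half-space limiting problem cannot even be identified, and the restriction $p<\frac{2^*}{2}+1=\frac{2(N-1)}{N-2}$ is \emph{not} a Liouville exponent (the whole-space Gidas--Spruck theorem \cite{gidas.spruck81} already holds for all $p<2^*$). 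What the paper actually does under $(A1)$ is a Br\'ezis--Turner argument: it takes the positive principal eigenfunction $f_1$ of $\mK_0$ from the Krein--Rutman theorem, tests the equation against $f_1$, gets a uniform bound on $\int_\Omega Q u^{p-1}f_1\,dx$, and then interpolates and bootstraps to $L^\infty$; the exponent restriction is exactly what makes the interpolation exponent $\frac{2\alpha}{p'}$ strictly less than $1$. Under $(A2)$, the blow-up argument does go through, but three cases must be treated (interior scaling, boundary scaling, and an intermediate regime); the half-space limit $-\Delta v=\beta(x^*)\operatorname{dist}(y,H)^\alpha v^{p-1}$ is excluded by the Liouville theorem of Du and Li \cite{du.li:05}, not a Pohozaev identity, and the remaining cases reduce to the $\R^N$-Liouville theorem valid for all $p<2^*$.

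Two further points. First, the mountain-pass starting solution and nondegeneracy homotopy are not needed: the paper directly computes the Leray--Schauder index of $F(-\Lambda_*,\cdot)$ on an annulus $\cA_{\delta,\gamma}\subset C(\oO)$ to be $-1$ (index $1$ on the small ball because no nontrivial solution has small $L^\infty(\Omega)$-norm, index $0$ on the large ball via the homotopy $u\mapsto F(\lambda,u)+tu_{0,\lambda}$), and then passes $\Lambda_{*,n}\to\infty$ and applies a Whyburn-type topological lemma to obtain the branch on all of $(-\infty,\Lambda^*]$. Second, your proof of (iii) has a genuine gap: a positive solution of $\Delta u+\lambda u\le 0$ with $\lambda>0$ need not lie in $L^2(\R^N)$, so the absence of $L^2$-eigenvalues of $-\Delta$ yields no contradiction. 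The paper instead invokes the rigidity theorem of Berestycki--Caffarelli--Nirenberg \cite[Theorem~1.8]{BCN97}: comparing the hypothetical positive solution $u$ with the Bessel profile $\psi(x)=|x|^{\frac{2-N}{2}}J_{\frac{N-2}{2}}(\sqrt\lambda\,|x|)$, which solves $\Delta\psi+\lambda\psi=0$ with the same decay rate, forces $\psi=Cu$, contradicting the sign changes of $\psi$.
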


We remark that the mere existence of solutions of \eqref{eq:eq-general-schr-1} for fixed $\lambda>0$ has been proved in the recent papers \cite{evequoz-weth-dual,evequoz-weth14}, see also \cite{evequoz:2015-1}. In the case $\lambda>0$, \eqref{eq:eq-general-schr-1} is usually called the nonlinear Helmholtz equation, and it is well known that this equation does not admit solutions in $L^2(\R^N)$. Consequently, the variational structure of \eqref{eq:eq-general-schr-1} is only formal in this case, since the associated energy functional contains the square of the $L^2(\R^N)$-norm. In the papers  \cite{evequoz-weth-dual,evequoz-weth14}, different methods have been used to reformulate the problem and to recover a valid variational setting. In \cite{evequoz-weth14}, for compactly supported $Q$ (and a more general class of superlinear
nonlinearities $f$ compactly supported in space), we used a Dirichlet-to-Neumann map associated to the exterior problem for the linear
Helmholtz equation to recast the problem in a variational setting in the space $H^1(B_R(0))$ for some $R>0$. Moreover, in \cite{evequoz-weth-dual} we have 
set up a dual variational framework for equation \eqref{eq:eq-general-schr-1} in the case $\lambda>0$ which also allows to study noncompactly supported weight functions $Q$. More precisely, using resolvent estimates for the linear Helmholtz operator, we have reformulated \eqref{eq:eq-general-schr-1} as a (generalized) integral equation in $v:= Q^{\frac{1}{p'}}|u|^{p-2}u$ which has a variational structure in $L^{p'}(\R^N)$, where $p':= \frac{p}{p-1}$ denotes the
conjugate exponent of $p$. 

In the present paper, we will also use a reformulation of \eqref{eq:eq-general-schr-1} as an integral equation, but instead of variational methods we will apply fixed point index calculations and the Leray-Schauder continuation principle to the reformulated problem in order to construct global solution branches. The main difficulty within this approach is the existence of a priori bounds. It clearly follows from the multiplicity results in \cite{evequoz-weth-dual,evequoz-weth14} that the set of all solutions of \eqref{eq:eq-general-schr-1} is unbounded in $L^p(\R^N)$ for fixed $\lambda>0$. Hence we need to restrict our attention to values $\lambda$ sufficiently close to $0$ and 
to a subclass of `admissible solutions'  which are positive on $\text{supp }Q$. We then use two different methods in order to derive a priori bounds. The first one is based on testing the equation with the first eigenfunction of an associated weighted eigenvalue problem and using bootstrap arguments. In the more classical context of semilinear Dirichlet problems on bounded domains, this method goes back to \cite{brezis-turner:77}. This method only applies for a restricted range of exponents as considered in assumption $(A1)$, but it does not require further conditions on the shape of $Q$. The second method is a contradiction argument based on a rescaling procedure and on Liouville type theorems for the corresponding limit problems.  
In the context of semilinear Dirichlet problems on bounded domains, this argument goes back to \cite{gidas.spruck81} and has also been applied to 
indefinite problems, see e.g. \cite{berestycki-cd-nirenberg94,amann-lopez98,ramosetal98}. Our approach is based on a relatively recent Liouville theorem in \cite{du.li:05} which, in the context of the present problem, leads to the assumption in $(A2)$ on the shape of $Q$ near the boundary of its support.  On the other hand, in contrast to the first method, it allows to consider the full range of subcritical exponents $p$.

We wish to point out three questions left open by Theorem~\ref{main-theorem}. 
First, we do not know whether the global branches provided in Theorem~\ref{main-theorem} can be continued to values $\lambda \ge \lambda_Q$. Second, in the case $N=2$ we cannot expect a result like Theorem~\ref{main-theorem} to hold, since branches of solutions which are positive on $\supp \, Q$ cannot intersect $\{0\} \times L^s(\R^N)$ for any $s \in [1,\infty]$.  Indeed, for $\lambda=0$, every solution of \eqref{eq:eq-general-schr-1} which is positive on $\supp \, Q$ is superharmonic, and there do not exist bounded nonconstant superharmonic functions in the case $N=2$. 
It is therefore an open question, which function space is most suitable to analyze solution branches in the case $N=2$. 
 The third open problem is concerned with the case where $Q$ is not compactly supported. At this point we have no idea how to obtain suitable a priori bounds in this case, and the local positivity property defining the subclass of admissible solutions also needs to be defined in a different way.

Finally, we wish to mention related work of  Heinz \cite{He86} who constructed global continuous branches of radial solutions with a prescribed number of zeros for \eqref{eq:eq-general-schr-1} with $\lambda>0$, in the case where $Q$ is negative and of the form $Q(x)=-e^{f(|x|)}$ with a positive and nondecreasing function 
$f$. We note that equation \eqref{eq:eq-general-schr-1} is usually called sublinear when $Q$ is negative. For a similar 
one-dimensional sublinear problem, 
a continuous branch of positive solutions was already exhibited by K\"upper in \cite{Ku79}. For more general sublinear equations, positive solution branches have also been detected by Giacomoni in \cite{Gi98}.

The paper is organized as follows. In Section~\ref{sec:resolv}, we collect useful estimates for the fundamental solutions of the operators $-\Delta- \lambda$, $\lambda \in \R$ and the corresponding resolvent operators. In Section~\ref{sec:priori-bounds-solut}, we prove some upper and lower $L^{\infty}$-estimates which apply to nontrivial solutions $u \in L^{2^*}(\R^N)$ of \eqref{eq:eq-general-schr-1} and which are locally uniform in $\lambda \in \R$. In Section~\ref{sec:priori-bounds-nonn}, we reformulate our problem as an integral equation in the space of continuous functions on $\supp \, Q$, and we establish a priori bounds for positive solutions of this problem in the case where $\lambda< \lambda_Q$ and one of the assumptions $(A1)$ or $(A2)$ is satisfied. In Section~\ref{sec:glob-branch-posit},  we then complete the proof of Theorem~\ref{main-theorem} by means of fixed point index calculations and the Leray-Schauder continuation principle. Finally, in the appendix we state a result from general topology which is used in Theorem~\ref{main-theorem} in order to pass to unbounded solution branches.

\section{Notation and preliminary results}
\label{sec:resolv}

Throughout the paper, we assume that $N \ge 3$, and we let $2^*:= \frac{2N}{N-2}$ denote the critical Sobolev exponent. For $\lambda \in \R$, 
we consider the following real-valued fundamental solution of the operator $-\Delta -\lambda$ on $\R^N$, see e.g. \cite{gelfand}:
$$
\Psi_\lambda(x)
=\left\{\begin{array}{ll} \frac1{2\pi} \left(\frac{\sqrt{|\lambda|}}{2\pi|x|}\right)^{\frac{N-2}{2}} K_{\frac{N-2}{2}}\left(\sqrt{|\lambda|}\ |x|\right),& \lambda<0 \\ 
\\ \frac{\Gamma\left(\frac{N-2}{2}\right)}{4\pi^{\frac{N}{2}}} |x|^{2-N}, & \lambda=0 \\ \\ 
-\frac{1}{4}\left(\frac{\sqrt{\lambda}}{2\pi |x|}\right)^{\frac{N-2}{2}} Y_{\frac{N-2}{2}}\left(\sqrt{\lambda}\ |x|\right),& \lambda>0. \end{array}\right.
$$
Here $Y_\nu$ denotes the Bessel function of the second kind and $K_{\nu}$ denotes the Macdonald function (or Bessel function of the third kind) of order $\nu$. In the following, we let $\cS$ denote the usual Schwartz space of rapidly decreasing functions.  The functions $\Psi_\lambda$ generate linear operators 
\begin{equation}
  \label{eq:mR-lambda-prelim}
\cS \to C^\infty(\R^N), \qquad f \mapsto  \Psi_\lambda \ast f, 
\end{equation}
where $\ast$ denotes the usual convolution. For $f\in\cS$, the function $u:= \Psi_\lambda\ast f\in C^\infty(\R^N)$ solves the inhomogeneous
equation $-\Delta u-\lambda u=f$ and satisfies the decay estimate $|u(x)|=O\left(|x|^{\frac{1-N}{2}}\right)$. We point out that, for $\lambda>0$, $\Psi_\lambda$ is the real part of the function $$
x \mapsto \frac{i}{4}\left(\frac{\sqrt{\lambda}}{2\pi |x|}\right)^{\frac{N-2}{2}} H^{(1)}_{\frac{N-2}{2}}\bigl(\sqrt{\lambda}\ |x|\bigr)
$$ 
which is the usual fundamental solution of the Helmholtz operator associated with Sommerfeld's (outward) radiation condition,  see e.g. \cite[Section 2]{evequoz-weth-dual}. Here $H^{(1)}_{\frac{N-2}{2}}$ denotes the Hankel function of the first kind of order $\frac{N-2}{2}$. In contrast, $\Psi_\lambda$ should be seen as the fundamental solution associated with standing wave solutions of the linear (inhomogeneous) Helmholtz equation.

We need some estimates for the functions $\Psi_\lambda$. If $\nu\geq 0$, we denote by $y_\nu^{(1)}$ the first positive
zero of the function $Y_\nu$. Moreover, we put 
\begin{equation}
  \label{eq:def-gamma-N}
\gamma_N:=\left\{\begin{array}{ll} 1, & N=3, \\ 
-\frac{\pi}{\Gamma\left(\frac{N-2}{2}\right)} \left(\frac12 y_{\frac{N-4}{2}}^{(1)}\right)^{\frac{N-2}{2}}
Y_{\frac{N-2}{2}}\left(y_{\frac{N-4}{2}}^{(1)}\right), & N\geq 4.
\end{array}\right.
\end{equation}

\begin{lemma} \label{lem:bessel}$ $

(i) For all $x \in \R^N \setminus \{0\}$, the function $\R \to \R$, $\lambda\mapsto \Psi_\lambda(x)$ is continuous.

(ii) For $\lambda\leq0$ and $x\in\R^N$ we have $0<\Psi_\lambda(x)\leq \Psi_0(x)$.

(iii) For $\lambda> 0$ and $x\in\R^N$ with $\sqrt{\lambda}|x|<y_{\frac{N-2}{2}}^{(1)}$ we have $0<\Psi_\lambda(x)\leq \gamma_N \Psi_0(x)$.  

(iv) For every $\Lambda_*, {\Lambda^*},r_0 >0$ satisfying $\sqrt{{\Lambda^*}}r_0<y^{(1)}_{\frac{N-2}{2}}$ 
there exists $\eps_0=\eps_0(\Lambda_*,{\Lambda^*},r_0)>0$ such that 
$$
\Psi_\lambda(x)\geq \eps_0\Psi_0(x) \qquad \text{for $x \in \R^N$ with $|x|\le r_0$ and $\lambda\in[-\Lambda_*,{\Lambda^*}]$.}
$$

(v) There exists a constant $\zeta_N\geq 0$ such that 
$$
|\Psi_\lambda(x)|\leq 
\gamma_N\Psi_0(x) +\zeta_N\ \lambda^{\frac{N-3}{4}} |x|^{\frac{1-N}{2}}\qquad \text{for all $x\in\R^N$, $\lambda>0$.}
$$

If $N=3$ the estimate holds with $\zeta_N=0$.
\end{lemma}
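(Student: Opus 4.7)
The common thread is that, writing $\nu:=\tfrac{N-2}{2}$ and $t:=\sqrt{|\lambda|}\,|x|$, a short direct calculation yields
\[
\frac{\Psi_\lambda(x)}{\Psi_0(x)}=\frac{2^{1-\nu}\,t^\nu K_\nu(t)}{\Gamma(\nu)}\quad(\lambda<0),\qquad \frac{\Psi_\lambda(x)}{\Psi_0(x)}=h(t):=-\frac{\pi\,t^\nu Y_\nu(t)}{2^\nu\,\Gamma(\nu)}\quad(\lambda>0),
\]
so every statement reduces to an analysis of a one-variable function. Part (i) then follows at once from continuity of $K_\nu$, $Y_\nu$ on $(0,\infty)$ together with the small-argument asymptotics $K_\nu(z)\sim\tfrac{1}{2}\Gamma(\nu)(z/2)^{-\nu}$ and $Y_\nu(z)\sim -\tfrac{1}{\pi}\Gamma(\nu)(z/2)^{-\nu}$ as $z\to 0^+$, which force both ratios above to tend to $1$ as $\lambda\to 0$. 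For (ii) I would invoke the heat-kernel representation $\Psi_\lambda(x)=\int_0^\infty e^{\lambda t}(4\pi t)^{-N/2}e^{-|x|^2/(4t)}\,dt$, valid for $\lambda\leq 0$, with integrability at $t=\infty$ ensured by $N\geq 3$ when $\lambda=0$; the integrand is strictly positive and $0<e^{\lambda t}\leq 1$ immediately yields $0<\Psi_\lambda(x)\leq\Psi_0(x)$.

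For (iii) I would study $h$ directly. When $N\geq 4$, the recurrence $\frac{d}{dt}\bigl(t^\nu Y_\nu(t)\bigr)=t^\nu Y_{\nu-1}(t)$, combined with the fact that $Y_{\nu-1}$ diverges to $-\infty$ at $0$ and has $y^{(1)}_{\nu-1}$ as its first positive zero, shows that $t^\nu Y_\nu(t)$ is strictly decreasing on $(0,y^{(1)}_{\nu-1})$, so $h$ is strictly increasing there from $1$ up to $h(y^{(1)}_{\nu-1})=\gamma_N$, the last identification being a direct reading of \eqref{eq:def-gamma-N}. The classical interlacing inequality $y^{(1)}_{\nu-1}<y^{(1)}_\nu$ then forces $Y_{\nu-1}>0$ on $(y^{(1)}_{\nu-1},y^{(1)}_\nu)$, so $h$ decreases from $\gamma_N$ down to $0$ on that subinterval. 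Altogether $0<h(t)\leq\gamma_N$ on $(0,y^{(1)}_\nu)$, with positivity also using $Y_\nu<0$ there. For $N=3$ the identity $Y_{1/2}(z)=-\sqrt{2/(\pi z)}\cos z$ gives $h(t)=\cos t$, which decreases from $1$ to $0$ on $(0,\pi/2)=(0,y^{(1)}_{1/2})$, matching $\gamma_3=1$.

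Part (iv) is then a compactness argument: by (ii) and (iii), the ratio $\Psi_\lambda(x)/\Psi_0(x)$ is continuous and strictly positive on $[-\Lambda_*,\Lambda^*]\times(0,r_0]$, and in dimensionless coordinates depends only on $\sqrt{|\lambda|}\,|x|\in[0,\max\{\sqrt{\Lambda_*},\sqrt{\Lambda^*}\}\,r_0]\subset[0,y^{(1)}_\nu)$ together with the sign of $\lambda$, so it is bounded below on this compact image by some positive $\eps_0$. For (v) I would split at $\sqrt{\lambda}\,|x|=y^{(1)}_\nu$: on the interval $\sqrt{\lambda}\,|x|\leq y^{(1)}_\nu$, (iii) already gives $|\Psi_\lambda(x)|\leq\gamma_N\Psi_0(x)$; on $\sqrt{\lambda}\,|x|\geq y^{(1)}_\nu$, the classical far-field asymptotic $Y_\nu(t)=\sqrt{2/(\pi t)}\sin(t-\tfrac{\nu\pi}{2}-\tfrac{\pi}{4})+O(t^{-3/2})$ yields $|Y_\nu(t)|\leq C\,t^{-1/2}$, and inserting this into the definition of $\Psi_\lambda$ and absorbing constants produces the bound $|\Psi_\lambda(x)|\leq\zeta_N\,\lambda^{(N-3)/4}|x|^{(1-N)/2}$. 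For $N=3$ the formula $Y_{1/2}(z)=-\sqrt{2/(\pi z)}\cos z$ is exact and gives $|\Psi_\lambda(x)|\leq\Psi_0(x)=\gamma_3\Psi_0(x)$ for every $x$, so $\zeta_3=0$ is admissible.

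I expect the main technical point to be the identification of the sharp constant $\gamma_N$ in (iii) for $N\geq 4$: once the derivative identity $(t^\nu Y_\nu(t))'=t^\nu Y_{\nu-1}(t)$ and the interlacing inequality $y^{(1)}_{\nu-1}<y^{(1)}_\nu$ are in hand, the remaining parts are essentially routine consequences of classical properties of the Bessel and Macdonald functions.
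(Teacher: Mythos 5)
Your proof is correct, and the crux of the matter --- part (iii) --- is handled by exactly the same key lemma as the paper: the derivative identity $(t^\nu Y_\nu(t))' = t^\nu Y_{\nu-1}(t)$ (equivalently, in the paper's notation, \eqref{eqn:deriv_psi}), the sign of $Y_{\nu-1}$ on $(0,y^{(1)}_{\nu-1})$ and $(y^{(1)}_{\nu-1},y^{(1)}_\nu)$, and the interlacing $y^{(1)}_{\nu-1}<y^{(1)}_\nu$, together exactly producing the constant $\gamma_N$ in \eqref{eq:def-gamma-N}. Two pieces differ in a way worth noting. For (ii) you invoke the heat-kernel subordination $\Psi_\lambda = \int_0^\infty e^{\lambda t}(4\pi t)^{-N/2}e^{-|x|^2/(4t)}\,dt$ for $\lambda\le 0$ (convergent at $t=\infty$ precisely because $N\ge 3$), from which $0<\Psi_\lambda\le\Psi_0$ follows instantly from $0<e^{\lambda t}\le 1$; the paper instead differentiates $\psi_r$ in $\sqrt{|\lambda|}$ and uses $K_{\frac{N-4}{2}}>0$. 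Your route is self-contained and arguably more transparent, at the cost of importing one additional integral representation. For (iv) you use a compactness argument on the ratio restricted to $[0,\sqrt{\Lambda_*}\,r_0]$ and $[0,\sqrt{\Lambda^*}\,r_0]$, whereas the paper also exploits monotonicity of $\psi_r(\lambda)$ in $r$ (the identity $\frac{d}{dr}\psi_r=\frac{\sqrt{|\lambda|}}{r}\frac{d}{d\sqrt{|\lambda|}}\psi_r$) so as to reduce to $r=r_0$ only; both give the same $\eps_0$ up to constants. For (v) you split at $t=y^{(1)}_\nu$ and bound $|Y_\nu(t)|\lesssim t^{-1/2}$ on the far side, while the paper packages the same information into a single global bound $|\chi(t)|\le\gamma_N+\tilde\zeta_N t^{\frac{N-3}{2}}$ for the ratio $\chi$; again equivalent. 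In short: correct throughout, essentially the paper's argument in (i), (iii)--(v), with a genuinely alternative (and cleaner) proof of (ii).
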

\begin{proof}
For $r>0$, we consider the functions
$$
\psi_r: \R \to \R, \qquad   \psi_r(\lambda):=\left\{
\begin{aligned}
& \frac2{\Gamma(\frac{N-2}{2})}\Bigl(\frac{\sqrt{|\lambda|}\ r}{2}\Bigr)^{\frac{N-2}{2}} K_{\frac{N-2}{2}}\left(\sqrt{|\lambda|}\ r\right),&&\qquad \lambda<0, \\ 
&-\frac{\pi}{\Gamma(\frac{N-2}{2})}\Bigl(\frac{\sqrt{\lambda}\ r}{2}\Bigr)^{\frac{N-2}{2}} Y_{\frac{N-2}{2}}\left(\sqrt{\lambda}\ r\right),&&\qquad \lambda>0,\\
& 1,&&\qquad \lambda=0.
  \end{aligned}
\right.
$$
The asymptotics of the Bessel functions $K_\nu$ and $Y_\nu$ for small positive argument (see, e.g., \cite[Eq. (5.16.2) and (5.16.4)]{lebedev}) imply that
$$
\lim_{\lambda \to 0} \psi_r(\lambda)= 1= \psi_r(0) \qquad \text{for all $r>0$,}
$$
so the functions $\psi_r$ are continuous. Moreover, by definition we have 
\begin{equation}
  \label{eq:fundamental-psi-rel}
\Psi_\lambda(x)=\psi_{|x|}(\lambda)\Psi_0(x) \qquad \text{for $x \in \R^N \setminus \{0\}$ and $\lambda \in \R$,}  
\end{equation}
and thus we obtain assertion (i).

Next we note that the function $K_{\frac{N-2}{2}}$ is strictly positive on $(0,\infty)$ and the function $Y_{\frac{N-2}{2}}$ is strictly
negative in the range $(0,y_{\frac{N-2}{2}}^{(1)})$ (see \cite[p.~135--136]{lebedev}). Therefore, 
for $x \in \R^N \setminus \{0\}$ we have 
$$
\Psi_\lambda(x)>0 \qquad \text{if $\lambda\leq 0 \quad$ or $\quad \lambda>0$ and $\sqrt{\lambda}|x|<y_{\frac{N-2}{2}}^{(1)}$.}
$$ 
This proves the first inequality in (ii) and (iii). 
To prove the other inequality, we first consider the case $N=3$. 
In this case, using \cite[Eq. (5.8.4) and (5.8.5)]{lebedev}, we find that 
\begin{equation}\label{eqn:psi3}
\psi_r(\lambda)= e^{-\sqrt{|\lambda|}r} \quad \text{for $\lambda<0$} \qquad \text{and}\qquad 
\psi_r(\lambda)= \cos\left(\sqrt{\lambda}\ r\right) \quad \text{for $\lambda>0.$}
\end{equation}
In particular, $\psi_r(\lambda)\leq 1=\psi_r(0)$ for $\lambda\in\R$, $r>0$, 
showing that $\Psi_\lambda(x)\leq\Psi_0(x)$ for all $\lambda$ and $x$.
This concludes the proof of (ii) and (iii) in dimension $N=3$. 
When $N\geq 4$, we differentiate $\psi_r$ and obtain
\begin{equation}\label{eqn:deriv_psi}
\frac{d}{d\sqrt{|\lambda|}}\psi_r(\lambda)=\left\{\begin{array}{ll}
-\frac{2r}{\Gamma(\frac{N-2}{2})}\left(\frac{\sqrt{|\lambda|}\ r}{2}\right)^{\frac{N-2}{2}} K_{\frac{N-4}{2}}\left(\sqrt{|\lambda|}\ r\right),&\qquad \lambda<0, \\ 
\\ 
-\frac{\pi r}{\Gamma(\frac{N-2}{2})}\left(\frac{\sqrt{\lambda}\ r}{2}\right)^{\frac{N-2}{2}} Y_{\frac{N-4}{2}}\left(\sqrt{\lambda}\ r\right),&\qquad \lambda>0. 
\end{array}\right.
\end{equation}
Using again the positivity of $K_\nu$ for $\nu\geq 0$, we deduce that $\psi_r(\lambda)\leq\psi_r(0)=1$, 
and therefore $\Psi_\lambda(x)\leq\Psi_0(x)$, for all $\lambda\leq 0$ and all $r>0$, $x\in\R^N$. 
For $\lambda>0$, we use the fact that the zeros of $Y_{\frac{N-4}{2}}$
and $Y_{\frac{N-2}{2}}$ are interlaced with $y_{\frac{N-4}{2}}^{(1)}<y_{\frac{N-2}{2}}^{(1)}$ (see \cite[Sec. 15.22]{watson}), whereas $Y_{\frac{N-4}{2}}$ is negative in $(0,y_{\frac{N-4}{2}}^{(1)})$ and positive in $(y_{\frac{N-4}{2}}^{(1)},y_{\frac{N-2}{2}}^{(1)})$. Hence \eqref{eqn:deriv_psi} gives 
\begin{equation}
  \label{eq:gamma-N-est}
0 \le \psi_r(\lambda)\leq \psi_r\left(r^{-2}\left(y_{\frac{N-4}{2}}^{(1)}\right)^2\right)
=-\frac{\pi}{\Gamma(\frac{N-2}{2})}\left({\textstyle\frac12}y_{\frac{N-4}{2}}^{(1)}\right)^{\frac{N-2}{2}} 
Y_{\frac{N-2}{2}}\left(y_{\frac{N-4}{2}}^{(1)}\right)=\gamma_N 
\end{equation}
for $\lambda>0$ with $\sqrt{\lambda}r\leq y_{\frac{N-2}{2}}^{(1)}$. Combining this with \eqref{eq:fundamental-psi-rel}, we infer that 
$$
\Psi_\lambda(x)\leq \gamma_N\Psi_0(x) \qquad \text{for $x \in \R^N$ and $\lambda>0$ 
with $0< \sqrt{\lambda}|x| \leq y_{\frac{N-2}{2}}^{(1)}$.}
$$ 
Hence, assertions (ii) and (iii) also hold in the case $N\geq 4$.

Let us now fix $\Lambda_*, {\Lambda^*}, r_0>0$ such that $\sqrt{{\Lambda^*}}\ r_0<y_{\frac{N-2}{2}}^{(1)}$.
As a consequence of \eqref{eqn:psi3}, \eqref{eqn:deriv_psi} and the fact that  
$$
\frac{d}{dr}\psi_r(\lambda)=\frac{\sqrt{|\lambda|}}{r}\frac{d}{d\sqrt{|\lambda|}}\psi_r(\lambda)<0 \qquad \text{for $\lambda < 0$, $r>0$,}
$$
we see that 
\begin{equation}
  \label{eq:r-Lambda-star-1}
\psi_r(\lambda)\geq \psi_r(-\Lambda_*)
\ge \psi_{r_0}(-\Lambda_*)>0\qquad \text{for $0\geq\lambda\geq-\Lambda_*$ and $0<r\leq r_0$.}
\end{equation}
On the other hand, if $\lambda>0$, then \eqref{eqn:psi3} and \eqref{eqn:deriv_psi} imply that 
\begin{equation}
  \label{eq:r-Lambda-star-2}
\psi_r(\lambda)\geq\min\{1,\psi_{r_0}({\Lambda^*})\}>0 \qquad \text{for $\lambda\leq{\Lambda^*}$ and $0<r\leq r_0$,}
\end{equation}
 since $\psi_r(0)=1$ and $\sqrt{\lambda}\ r\leq\sqrt{{\Lambda^*}}\ r_0<y_{\frac{N-2}{2}}^{(1)}$.
Setting $\eps_0=\min\{1,\psi_{r_0}(-\Lambda_*),\psi_{r_0}({\Lambda^*})\}$, we then deduce from 
\eqref{eq:fundamental-psi-rel},~\eqref{eq:r-Lambda-star-1} and \eqref{eq:r-Lambda-star-2} that
$$
\Psi_\lambda(x)\geq \eps_0\Psi_0(x)\qquad \text{for $x \in \R^N$ with $0<|x|\leq r_0$ and $\lambda\in[-\Lambda_*,{\Lambda^*}]$},
$$ 
thus proving (iv).

It remains to prove (v). In the case where $N=3$, \eqref{eq:fundamental-psi-rel} and \eqref{eqn:psi3} readily give 
$$
|\Psi_\lambda(x)|\leq \Psi_0(x) \qquad \text{for $\lambda>0$ and $x \in \R^N \setminus \{0\}$.}
$$ 
In the case where $N\geq 4$, we let
$$
\chi(t):=-\frac{\pi}{\Gamma(\frac{N-2}{2})}\left(\frac{t}2\right)^{\frac{N-2}{2}}Y_{\frac{N-2}{2}}(t) \quad \text{for $t>0$}\qquad \text{and}\qquad \chi(0)=1.
$$
From our above considerations, we see that $\chi$ is continuous on $[0,\infty)$. Moreover, 
\eqref{eq:gamma-N-est} and the identity $\psi_r(\lambda)=\chi(\sqrt{\lambda}r)$, for $r>0$, $\lambda\geq0$ imply that 
$$
0\leq\chi(t)\leq\gamma_N \qquad \text{for $0\leq t\leq y_{\frac{N-2}{2}}^{(1)}$.}
$$
Furthermore, the asymptotics of $Y_{\frac{N-2}{2}}$ yield
$\limsup\limits_{t\to\infty}t^{-\frac{N-3}{2}}|\chi(t)|<\infty$ (see \cite[Eq. (5.16.2)]{lebedev}).
Therefore, we can find some constant $\tilde{\zeta}_N>0$ such that
$|\chi(t)|\leq \gamma_N+\tilde{\zeta}_N t^{\frac{N-3}{2}}$ for all $t\geq 0$.
The conclusion follows by setting $\zeta_N=\tilde{\zeta}_N\frac{4\pi^{\frac{N-2}{2}}}{\Gamma(\frac{N-2}{2})}$
and by noticing that $\Psi_\lambda(x)=\chi(\sqrt{\lambda}|x|)\Psi_0(x)$ for $x \in \R^N \setminus \{0\}$ and $\lambda\geq 0$.
\end{proof}

In the following, we collect some fundamental properties of the resolvent operators introduced in \eqref{eq:mR-lambda-prelim}. For this it will be useful to 
consider their extensions to continuous linear operators $L^{(2^\ast)'}(\R^N) \to L^{2^\ast}(\R^N)$. 

\begin{lemma} \label{lem:reg1bis-0}
For every $\lambda\in\R$, the map $\cS \to C^\infty(\R^N), \; f \mapsto  \Psi_\lambda \ast f$ extends to a continuous linear operator 
$$
\mR_\lambda: L^{(2^\ast)'}(\R^N) \to L^{2^\ast}(\R^N).
$$ 
\end{lemma}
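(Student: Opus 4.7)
The plan is to establish the uniform inequality
\[
\|\Psi_\lambda \ast f\|_{L^{2^\ast}(\R^N)} \le C(\lambda) \|f\|_{L^{(2^\ast)'}(\R^N)} \qquad \text{for all } f \in \cS,
\]
and then extend to $L^{(2^\ast)'}(\R^N)$ by density of $\cS$. I would treat the elliptic range $\lambda \le 0$ and the Helmholtz range $\lambda > 0$ separately, since they call for completely different tools.

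In the elliptic range $\lambda \le 0$, Lemma~\ref{lem:bessel}(ii) provides the pointwise domination $0 < \Psi_\lambda(x) \le \Psi_0(x) = c_N |x|^{2-N}$, so that $|\Psi_\lambda \ast f|$ is controlled pointwise by the Riesz potential of order $2$ applied to $|f|$. Since the exponents satisfy the scaling relation $\tfrac{1}{(2^\ast)'} - \tfrac{1}{2^\ast} = \tfrac{2}{N}$, the Hardy--Littlewood--Sobolev inequality immediately furnishes the desired bound, with a constant that is in fact uniform in $\lambda \in (-\infty,0]$.

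In the Helmholtz range $\lambda > 0$ the strategy must change. The kernel $\Psi_\lambda$ oscillates and decays only like $|x|^{(1-N)/2}$ (see Lemma~\ref{lem:bessel}(v)), so for $N \ge 4$ it does not lie in any Lorentz class from which Young's convolution inequality would recover the endpoint pair $((2^\ast)', 2^\ast)$. Instead, I would appeal to the Kenig--Ruiz--Sogge uniform Sobolev / resolvent estimate: convolution with $\Psi_\lambda$ is the real part of the outgoing Helmholtz resolvent $(-\Delta - \lambda - i0)^{-1}$, and that theorem precisely guarantees its boundedness $L^{(2^\ast)'}(\R^N) \to L^{2^\ast}(\R^N)$. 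In practice I would cite the authors' earlier paper \cite{evequoz-weth-dual}, where this mapping property of $\mR_\lambda$ is spelled out and used extensively, or the classical references to Kenig--Ruiz--Sogge and Guti\'errez.

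The only real obstacle is the case $\lambda > 0$: the lack of any absolutely integrable or Lorentz-class majorant that captures the oscillation means one cannot argue by positive kernel comparison, and some form of restriction-type cancellation must be exploited. Once the Kenig--Ruiz--Sogge estimate is granted, combining it with the elliptic case above and density of $\cS$ in $L^{(2^\ast)'}(\R^N)$ yields the well-defined continuous extension $\mR_\lambda : L^{(2^\ast)'}(\R^N) \to L^{2^\ast}(\R^N)$ asserted in the lemma.
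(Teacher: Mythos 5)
Your proposal is correct and follows essentially the same route as the paper: for $\lambda\le 0$ you use the pointwise domination $0<\Psi_\lambda\le \Psi_0$ from Lemma~\ref{lem:bessel}(ii) together with the Hardy--Littlewood--Sobolev/weak Young inequality (the paper phrases it via $\Psi_0\in L^{\frac{N}{N-2},w}(\R^N)$ and the weak Young inequality, which is the same estimate), and for $\lambda>0$ you invoke the Kenig--Ruiz--Sogge uniform resolvent bound, exactly as the paper does by citing \cite[Theorem 2.3]{KRS87}.
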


\begin{proof}
By definition, $\Psi_0 \in L^{\frac{N}{N-2},w}(\R^N)$, the weak $L^{\frac{N}{N-2}}(\R^N)$-space.  For $\lambda \le 0$, we may therefore use 
Lemma~\ref{lem:bessel}(ii) and the weak Young inequality to see that
$$
\|\mR_\lambda(f)\|_{2^\ast}=\|\Psi_\lambda\ast f\|_{2^\ast}\leq \|\Psi_0\ast |f|\ \|_{2^\ast}
\leq\|\Psi_0\|_{\frac{N}{N-2},w} \|f\|_{(2^\ast)'} \qquad \text{for $f \in L^{(2^\ast)'}(\R^N)$,}
$$
where $\|\cdot\|_{\frac{N}{N-2},w}$ denotes the norm in $L^{\frac{N}{N-2},w}(\R^N)$. For $\lambda>0$, the result is a special case of a Theorem by Kenig, Ruiz and Sogge \cite[Theorem 2.3]{KRS87}.
\end{proof}

The next lemma is concerned with interior elliptic estimates for solutions of the inhomogeneous Helmholtz equation defined via the operator $\mR_\lambda$.

\begin{lemma} \label{lem:reg1bis}
Let $\lambda\in\R$, $f\in L^{(2^\ast)'}(\R^N)$ and $u:=\mR_\lambda(f)$. Then we have:
\begin{itemize}
\item[(a)]  $u \in W^{2,(2^\ast)'}_{\text{loc}}(\R^N)\cap L^{2^\ast}(\R^N)$ is 
a strong solution of $-\Delta u-\lambda u=f$ in $\R^N$.
\item[(b)] If, in addition, $f, u \in L^t_{\text{loc}}(\R^N)$ for some $t \in (1,\infty)$, then $u\in W^{2,t}_{\text{loc}}(\R^N)$.\\
Moreover, for given ${\Lambda_*}, \Lambda^*>0$ and $0<r<R$ there exists a constant $C>0$ depending only on $N,t,R,r,{\Lambda_*}$ and $\Lambda^*$ such that in case $-\Lambda_* \le \lambda \le \Lambda^*$ we have
$$
\|u\|_{W^{2,t}(B_r(x_0))}\leq C \left( \|u\|_{L^t(B_R(x_0))}+\|f\|_{L^t(B_R(x_0))}\right) \qquad \text{for every $x_0 \in \R^N$.}
$$
\end{itemize}
\end{lemma}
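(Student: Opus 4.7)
The plan is to handle (a) by approximation from Schwartz data and (b) by a standard Calderón--Zygmund bootstrap, with the $\lambda$-dependence made uniform by absorbing the lower-order term into the constant.

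For (a), I start from the fact (recorded in the discussion around \eqref{eq:mR-lambda-prelim}) that for $f \in \cS$ the function $\Psi_\lambda \ast f \in C^\infty(\R^N)$ is a classical solution of $-\Delta u - \lambda u = f$. Given $f \in L^{(2^\ast)'}(\R^N)$, I would pick a sequence $f_n \in \cS$ converging to $f$ in $L^{(2^\ast)'}(\R^N)$, set $u_n := \Psi_\lambda \ast f_n$, and apply Lemma~\ref{lem:reg1bis-0} to conclude $u_n \to u$ in $L^{2^\ast}(\R^N)$. Testing the classical identity $-\Delta u_n - \lambda u_n = f_n$ against an arbitrary $\varphi \in C^\infty_c(\R^N)$ and passing to the limit gives $-\Delta u - \lambda u = f$ in $\mathcal{D}'(\R^N)$. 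Then I rewrite this as $-\Delta u = f + \lambda u$, observe that $u \in L^{2^\ast}(\R^N) \subset L^{(2^\ast)'}_{\text{loc}}(\R^N)$, so the right-hand side lies in $L^{(2^\ast)'}_{\text{loc}}(\R^N)$, and invoke interior $W^{2,(2^\ast)'}$-regularity for the Laplacian to upgrade $u$ to $W^{2,(2^\ast)'}_{\text{loc}}(\R^N)$; this makes the equation hold strongly a.e.

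For (b), under the extra hypothesis $f,u \in L^t_{\text{loc}}(\R^N)$ the right-hand side of $-\Delta u = f + \lambda u$ is in $L^t_{\text{loc}}(\R^N)$, and the classical interior Calderón--Zygmund estimate for $-\Delta$ yields
\begin{equation*}
\|u\|_{W^{2,t}(B_r(x_0))} \le C_0 \bigl(\|\Delta u\|_{L^t(B_R(x_0))} + \|u\|_{L^t(B_R(x_0))}\bigr)
\end{equation*}
with $C_0 = C_0(N,t,r,R)$ independent of $x_0$ by translation invariance of $-\Delta$. Combining this with the triangle inequality $\|\Delta u\|_{L^t(B_R(x_0))} \le \|f\|_{L^t(B_R(x_0))} + |\lambda|\,\|u\|_{L^t(B_R(x_0))}$ and the uniform bound $|\lambda| \le \max\{\Lambda_*,\Lambda^*\}$, I absorb the $\lambda$-term to obtain the claimed estimate with a constant $C = C(N,t,r,R,\Lambda_*,\Lambda^*)$.

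The main obstacle I expect is the regularity upgrade at the end of (a): the distributional identity alone does not automatically place $u$ in $W^{2,(2^\ast)'}_{\text{loc}}(\R^N)$, and one must appeal to the local $W^{2,p}$ theory of the Laplacian with $p=(2^\ast)'$ applied on compact subsets. Everything else -- density of $\cS$, continuity of $\mR_\lambda$ from Lemma~\ref{lem:reg1bis-0}, and the translation invariance of the Calderón--Zygmund constant -- is either proved or completely standard.
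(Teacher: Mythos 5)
Your proposal is correct and takes essentially the same approach as the paper. The paper's own proof is a citation: for $\lambda>0$ it invokes Proposition A.1 of \cite{evequoz-weth-dual} directly, and for $\lambda\le 0$ it notes that $u\in L^{2^*}(\R^N)$ by Lemma~\ref{lem:reg1bis-0}, that a standard argument gives the distributional equation, and that "exactly the same proof as in [Proposition A.1] (based on the Calder\'on--Zygmund estimate)" then applies; your density argument via $\cS$ followed by the interior Calder\'on--Zygmund bootstrap is precisely a fleshed-out reconstruction of that cited proof, with the $\lambda$-dependence absorbed into the constant exactly as you describe.
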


\begin{proof}
For $\lambda>0$ the assertions (a) and (b) follow directly from \cite[Proposition A.1]{evequoz-weth-dual}. Moreover, in the case $\lambda\leq 0$, we have $u:=\mR_\lambda(f) \in L^{2^\ast}(\R^N)$ by Lemma~\ref{lem:reg1bis-0}, and a standard argument shows that $u:=\mR_\lambda(f)$ solves the equation $-\Delta u-\lambda u=f$ in $\R^N$ in distributional sense. Therefore, exactly the same proof as in 
\cite[Proposition A.1]{evequoz-weth-dual} (based on the Calder\'on-Zygmund estimate) yields properties (a) and (b) also in this case.
\end{proof}

The next Lemma yields a uniform $L^p$ estimate
for solutions of the inhomogeneous Helmholtz equation 
with compactly supported right-hand side and $\lambda$ bounded from above.
\begin{lemma}\label{lem:reg2}
Let $r_0, r, {\Lambda^*} >0$, $2<p\leq 2^\ast$ and $1 \le q \le p$. Then there exists a constant $D=D(N,p,q,r_0,r,{\Lambda^*})>0$ such that for
$f\in L^{p'}_c(\R^N)$ with $\text{diam}(\text{supp }f)\leq r_0$, $\lambda\leq{\Lambda^*}$ and $x_0\in\R^N$ there holds
$$
\|\mR_\lambda(f)\|_{L^q(B_r(x_0))}\leq D \|f\|_{p'}.
$$
\end{lemma}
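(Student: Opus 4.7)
The strategy is to sandwich the desired $L^q(B_r(x_0))$ bound between two applications of Hölder's inequality and the global $L^{(2^\ast)'}\to L^{2^\ast}$ estimate supplied by Lemma~\ref{lem:reg1bis-0}. This works cleanly because the hypotheses $1\le q\le p\le 2^\ast$ give simultaneously $(2^\ast)'\le p'$ and $q\le 2^\ast$, which are precisely the Hölder-inclusion conditions needed on bounded domains.

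In detail, the argument would proceed in three steps. First, since $\text{diam}(\text{supp}\,f)\le r_0$, the support of $f$ is contained in some closed ball of radius $r_0$, and the inclusion $L^{p'}\hookrightarrow L^{(2^\ast)'}$ on that ball yields
$$
\|f\|_{L^{(2^\ast)'}(\R^N)} \le C(N,p,r_0)\|f\|_{L^{p'}(\R^N)}.
$$
Second, Lemma~\ref{lem:reg1bis-0} provides a constant $C_1 = C_1(N,\Lambda^\ast)$ such that
$$
\|\mR_\lambda(f)\|_{L^{2^\ast}(\R^N)} \le C_1\|f\|_{L^{(2^\ast)'}(\R^N)} \qquad \text{for every } \lambda\le\Lambda^\ast.
$$
Third, since $q\le 2^\ast$ and $|B_r(x_0)|$ is finite, one more application of Hölder gives
$$
\|\mR_\lambda(f)\|_{L^q(B_r(x_0))} \le |B_r(x_0)|^{\frac{1}{q}-\frac{1}{2^\ast}}\|\mR_\lambda(f)\|_{L^{2^\ast}(\R^N)}.
$$
Chaining these three inequalities produces the desired bound with $D=D(N,p,q,r_0,r,\Lambda^\ast)$.

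The only point deserving real care is the uniformity of $C_1$ over $\lambda\le\Lambda^\ast$. For $\lambda\le 0$ this is immediate from the proof of Lemma~\ref{lem:reg1bis-0}: the bound $|\Psi_\lambda|\le\Psi_0$ given by Lemma~\ref{lem:bessel}(ii) reduces everything to a $\lambda$-independent weak Young estimate. For $0<\lambda\le\Lambda^\ast$ the Kenig--Ruiz--Sogge constant inherited from that proof is in fact dilation-invariant, as the scaling $u(\cdot)\mapsto \lambda^{-1}u(\sqrt\lambda\,\cdot)$ shows, since the combined exponent on $\lambda$ collapses to zero. Alternatively, one can avoid this uniformity question altogether by using Lemma~\ref{lem:bessel}(v) to dominate $|\Psi_\lambda|$ pointwise by $\gamma_N\Psi_0(x) + \zeta_N(\Lambda^\ast)^{(N-3)/4}|x|^{(1-N)/2}$ and estimating each piece separately via weak Young (noting that $|x|^{(1-N)/2}\in L^{2N/(N-1),w}(\R^N)$); this is the route that most naturally explains the presence of $\Lambda^\ast$ in the stated constant.
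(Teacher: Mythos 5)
Your three-step sandwich through the \emph{global} estimate $\|\mR_\lambda f\|_{L^{2^*}(\R^N)} \le C_1\|f\|_{L^{(2^*)'}(\R^N)}$ is a genuinely different route from the paper's. The paper reduces to $q=p$ by H\"older, then applies Young's inequality (or weak Young for $p=2^*$) to the \emph{truncated} kernel $\Psi_\lambda\big|_{B_{r+r_0}(x_0-y_0)}$ and bounds its $L^{p/2}$ (or weak $L^{N/(N-2)}$) norm on that fixed ball directly via the pointwise estimate in Lemma~\ref{lem:bessel}(ii),(v) -- no global resolvent bound is ever invoked, and this is why $\Lambda^*$ appears in the constant. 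Your route instead leans on Lemma~\ref{lem:reg1bis-0} plus uniformity of the Kenig--Ruiz--Sogge constant in $\lambda$; your scaling remark is correct (the exponent one computes is $-1-\tfrac{N-2}{4}+\tfrac{N+2}{4}=0$), and once granted, it even yields a $\Lambda^*$-\emph{independent} constant, which is stronger than what is stated. The cost is that you rely on a uniformity statement that Lemma~\ref{lem:reg1bis-0} does not make explicit, whereas the paper's local argument is self-contained.

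Your back-up route, however, does not close that gap. For $N\ge 4$ the pointwise majorant $\gamma_N\Psi_0(x)+\zeta_N(\Lambda^*)^{(N-3)/4}|x|^{(1-N)/2}$ does \emph{not} lie in $L^{\frac{N}{N-2},w}(\R^N)$: the second term is homogeneous of degree $\tfrac{1-N}{2}$, hence belongs to $L^{\frac{2N}{N-1},w}(\R^N)$, and since $\tfrac{N-1}{2}<N-2$ for $N>3$ it decays too slowly at infinity to be in $L^{\frac{N}{N-2},w}(\R^N)$. Consequently weak Young does not give $\|\,|x|^{\frac{1-N}{2}}\ast f\|_{L^{2^*}(\R^N)}\lesssim\|f\|_{L^{(2^*)'}(\R^N)}$; it only gives membership in $L^{2N}(\R^N)$, which is insufficient to carry out your Step~2. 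The decomposition you cite \emph{is} the one the paper uses, but only after first localizing the convolution to a bounded ball, where the slow decay of $|x|^{(1-N)/2}$ at infinity is irrelevant. So either commit to the scaling/KRS argument, or abandon the sandwich and localize first -- the two halves of your proposal cannot be mixed in the way the final sentence suggests.
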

\begin{proof}
Since, by H\"older's inequality, 
$$
\|u\|_{L^q(B_r(x_0))}\leq |B_r(0)|^{\frac{1}{q}-\frac{1}{p}} \|u\|_{L^p(B_r(x_0))} \qquad \text{for $q \le p$, $r>0$, $x_0 \in \R^N$ and $u \in L^p(B_r(x_0))$,}
$$
it suffices to prove the estimate in the case $q=p$. We first note that, by Lemma~\ref{lem:bessel}(ii),(v), we have 
\begin{equation}
  \label{eq:combined-concl}
|\Psi_\lambda(x)|\leq \gamma_N\Psi_0(x)+\zeta_N ({\Lambda^*})^{\frac{N-3}{4}}|x|^{\frac{1-N}{2}},\quad\text{ for all $x\in\R^N,  \lambda\leq{\Lambda^*}$,}
\end{equation}
where $\zeta_N=0$ if $N=3$. Now, let $f\in L^{p'}_c(\R^N)$ with $\text{diam}(\text{supp }f)\leq r_0$ and choose $y_0\in\text{supp }f$. If $p<2^\ast$, Young's inequality gives 
\begin{equation}
  \label{eq:r-x-0-est-1}
\|u\|_{L^p(B_r(x_0))}=\|\Psi_\lambda\ast f\|_{L^p(B_r(x_0))}
\leq \|\Psi_\lambda\|_{L^{t}(B_{r+r_0}(x_0-y_0))} \|f\|_{p'} 
\end{equation}
for $r>0$ and $x_0\in\R^N$, where $1<t:=\frac{p}2<\frac{N}{N-2}$. Moreover, for $\rho>0$, $x_0\in\R^N$ we have
\begin{align*}
\left(\frac{4\pi^{\frac{N}{2}}}{\Gamma\left(\frac{N-2}{2}\right)}\right)^t\int_{B_\rho(x_0)}|\Psi_0(x)|^t\, dx 
&\leq \int_{B_1(0)}|x|^{(2-N)t}\, dx +\int_{B_\rho(x_0)\backslash B_1(0)}|x|^{(2-N)t}\, dx\\
&\leq \frac{|S^{N-1}|}{(N-(N-2)t)}+|S^{N-1}| \rho^N
\end{align*}
and, if $N\geq 4$,
\begin{align}
\int_{B_\rho(x_0)}\left(|x|^{\frac{1-N}{2}}\right)^t\, dx \nonumber
&\leq \int_{B_1(0)}|x|^{\frac{t(1-N)}{2}}\, dx +\int_{B_\rho(x_0)\backslash B_1(0)}|x|^{\frac{t(1-N)}{2}}\, dx\\
&\leq \frac{|S^{N-1}|}{(N-t\frac{(N-1)}{2})}+|S^{N-1}| \rho^N.\label{eq:new-weak-ineq}
\end{align}
Combining these estimates with \eqref{eq:combined-concl}, we infer that $\|\Psi_\lambda\|_{L^{t}(B_{r+r_0}(x_0-y_0))}$ is bounded above by a constant $D=D(N,p,q,r_0,r,{\Lambda^*})>0$, and thus \eqref{eq:r-x-0-est-1} yields the claim.  In case $p=2^\ast$ we find, using the weak Young inequality,
\begin{equation}
  \label{eq:r-x-0-est-2}
\|u\|_{L^{2^\ast}(B_r(x_0))}=\|\Psi_\lambda\ast f\|_{L^{2^\ast}(B_r(x_0))}
\leq \|\Psi_\lambda\|_{L^{\frac{N}{N-2},w}(B_{r+r_0}(x_0-y_0))} \|f\|_{(2^\ast)'},
\end{equation}
where $L^{\frac{N}{N-2},w}(B_{r+r_0}(x_0-y_0))$ denotes the weak $L^{\frac{N}{N-2}}(B_{r+r_0}(x_0-y_0))$-space.  Since $\Psi_0\in L^{{\frac{N}{N-2}},w}(\R^N)$ and \eqref{eq:new-weak-ineq} also holds for $t= \frac{N}{N-2}$, we infer from \eqref{eq:combined-concl} that $\|\Psi_\lambda\|_{L^{\frac{N}{N-2},w}(B_{r+r_0}(x_0-y_0))}
$ is bounded above by a constant $D=D(N,p,q,r_0,r,{\Lambda^*})>0$, and thus \eqref{eq:r-x-0-est-2} yields the claim in this case.
\end{proof}

\section{Some bounds for arbitrary solutions of the nonlinear problem}
\label{sec:priori-bounds-solut}

For the remainder of the paper, we fix a function $Q\in L^\infty_c(\R^N)\backslash\{0\}$ with $Q\geq 0$ a.e. on $\R^N$. Moreover, we set 
\begin{equation}
  \label{eq:def-r-0}
{r_Q}:=\text{diam}(\text{supp }Q) \in (0,\infty).
\end{equation}
We note that for $f \in L^1_{loc}(\R^N)$ we then have $Q f \in L^1_c(\R^N)$, and therefore $\mR_\lambda(Qf)$ is given as the convolution $ \Psi_\lambda \ast (Q f)$ a.e. on $\R^N$.  In the following, we consider nonlinear fixed point problems of the form 
\begin{equation}\label{eqn:fp_u-gen}
u=\mR_\lambda\left(Q|u|^{p-2}u + \varphi \right)= \Psi_\lambda \ast \left(Q|u|^{p-2}u + \varphi \right), \qquad u \in L^p_{loc}(\R^N)
\end{equation}
with $p>2$ and a given function $\varphi \in L^\infty_c(\R^N)$. We point out that, if $2<p<2^*$ and $u$
 is a solution of \eqref{eqn:fp_u-gen}, then we have $Q|u|^{p-2}u + \varphi \in L^{p'}_c(\R^N) \subset L^{(2^\ast)'}(\R^N)$, and thus Lemma~\ref{lem:reg1bis} implies that $u \in W^{2,(2^\ast)'}_{\text{loc}}(\R^N)\cap L^{2^\ast}(\R^N)$ is 
a strong solution of 
$$
 -\Delta u -\lambda u=  Q(x)|u|^{p-2}u+ \varphi  \qquad \text{in $\R^N$.}
$$
In particular, in the case $\phi \equiv 0$, $u$ is a strong solution of \eqref{eq:eq-general-schr-1}. We first need a relative $L^\infty$ estimate for solutions of \eqref{eqn:fp_u-gen}.
\begin{proposition}\label{prop:l_infty}
Let $2<p<2^\ast$, ${\Lambda^*}$, $\Lambda_*>0$ and $\lambda \in [-\Lambda_*, {\Lambda^*}]$, $\varphi\in L^\infty_c(\R^N)$ be given.
Then every solution $u \in L^p_{loc}(\R^N)$ of  \eqref{eqn:fp_u-gen} is contained in $L^\infty(\R^N) \cap W^{2,t}_{loc}(\R^N) \cap 
C^{1,\gamma}_{loc}(\R^N)$ for all $t \in [1,\infty), \gamma \in (0,1)$. Moreover, there exist constants 
$$
C=C\bigl(N,p,{r_Q},{\Lambda^*},\Lambda_*,\|Q\|_\infty\bigr)>0\qquad \text{and}\qquad m=m(N,p)\in\N
$$ 
independent of $u$, $\varphi$ and $\lambda$ such that
\begin{equation}
  \label{eq:infty-est}
\|u\|_\infty\leq C\left(\|Q|u|^{p-2}u\|_{p'}+\|Q|u|^{p-2}u\|_{p'}^{(p-1)^m}
+\|\varphi\|_\infty+\|\varphi\|_\infty^{(p-1)^m}\right).
\end{equation}
\end{proposition}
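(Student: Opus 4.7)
The plan is a bootstrap argument built on the integral formulation \eqref{eqn:fp_u-gen}: starting from the hypothesis $u\in L^p_{\text{loc}}(\R^N)$, I would iteratively upgrade the integrability of $u$ by combining the local $W^{2,t}$-estimate of Lemma~\ref{lem:reg1bis}(b) with the Sobolev embedding, until $L^\infty$ is reached. The polynomial structure of the right-hand side of \eqref{eq:infty-est} then emerges from the way the nonlinearity raises the relevant norms to the power $p-1$ at each step.

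For the base step, I would set $f:=Q|u|^{p-2}u+\varphi\in L^{p'}_c(\R^N)$ and apply Lemma~\ref{lem:reg2} with $q=p$ to obtain the uniform-in-$x_0$ estimate
$$
\|u\|_{L^p(B_r(x_0))}\leq D\bigl(\|Q|u|^{p-2}u\|_{p'}+C\|\varphi\|_\infty\bigr)
$$
after controlling $\|\varphi\|_{p'}$ by $\|\varphi\|_\infty$ via H\"older on its compact support. I would then set $t_0:=p$ and define inductively $t_{k+1}:=Nt_k/(N(p-1)-2t_k)$ as long as $t_k<N(p-1)/2$. Given a uniform-in-$x_0$ bound $A_k$ on $\|u\|_{L^{t_k}(B_R(x_0))}$, the nonlinearity yields $\|f\|_{L^{t_k/(p-1)}(B_R(x_0))}\lesssim A_k^{p-1}+\|\varphi\|_\infty$, Lemma~\ref{lem:reg1bis}(b) produces a uniform bound on $\|u\|_{W^{2,t_k/(p-1)}(B_r(x_0))}$, and the Sobolev embedding feeds back a uniform bound on $\|u\|_{L^{t_{k+1}}(B_r(x_0))}$. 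One checks that $t_{k+1}>t_k$ is equivalent to $t_k>N(p-2)/2$, which holds from the start because $p<2^\ast$ forces $p>N(p-2)/2$; hence the recursion is strictly increasing and, after a finite number $m=m(N,p)$ of steps, one has $t_m/(p-1)\geq N/2$. The Sobolev embedding at this stage yields a uniform $L^\infty(B_r(x_0))$-bound on $u$, and taking the supremum over $x_0\in\R^N$ gives $u\in L^\infty(\R^N)$.

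To read off \eqref{eq:infty-est} I would track constants through the iteration: each step transforms the current bound $A_k$ into one of the form $c_k(A_k^{p-1}+\|\varphi\|_\infty)$, and $m$-fold iteration dominates $\|u\|_\infty$ by a polynomial of degree $(p-1)^m$ in $A_0$ and in $\|\varphi\|_\infty$. Dominating intermediate powers $A_0^j$ (with $1<j<(p-1)^m$) by $A_0+A_0^{(p-1)^m}$, and likewise for $\|\varphi\|_\infty$, then substituting the base-step estimate $A_0\lesssim\|Q|u|^{p-2}u\|_{p'}+\|\varphi\|_\infty$, yields \eqref{eq:infty-est}. Once $u\in L^\infty(\R^N)$, we have $f\in L^\infty_c(\R^N)$; a final application of Lemma~\ref{lem:reg1bis}(b) with arbitrary $t\in(1,\infty)$ gives $u\in W^{2,t}_{\text{loc}}(\R^N)$, and Morrey's embedding delivers $u\in C^{1,\gamma}_{\text{loc}}(\R^N)$ for all $\gamma\in(0,1)$.

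The main obstacle I expect is the careful book-keeping of constants through the iteration: at every step one must verify that the prefactors depend only on $N,p,{r_Q},\Lambda_\ast,\Lambda^\ast,\|Q\|_\infty$ and not on $x_0$, and that the telescoped bound can be rearranged into the clean linear-plus-top-order form displayed in \eqref{eq:infty-est}. A smaller but nontrivial point is checking that the condition $t_k<N(p-1)/2$ fails after a number of steps $m$ depending only on $N$ and $p$, which requires verifying that the increments $t_{k+1}-t_k$ stay bounded away from zero throughout the admissible range.
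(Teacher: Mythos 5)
Your proposal is correct and follows essentially the same bootstrap as the paper: both iterate between uniform local $L^{t_k}$-bounds, uniform local $W^{2,t_k/(p-1)}$-bounds via Lemma~\ref{lem:reg1bis}(b), and Sobolev embedding, and both verify that the exponents escape past $N/2$ in finitely many steps $m(N,p)$ because the ratio of consecutive exponents is bounded below by a constant $>1$ precisely when $p<2^\ast$. The only cosmetic difference is that you start the recursion at $t_0=p$ directly from Lemma~\ref{lem:reg2} with $q=p$, while the paper first establishes a $W^{2,(2^\ast)'}_{\text{loc}}$-bound and indexes the iteration on the $W^{2,\cdot}$-exponents; the two recursions are equivalent after relabeling.
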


\begin{proof}
Let $f=Q|u|^{p-2}u$, so that $u$ solves the equation $u=\mR_\lambda(f+\varphi)$. 
Consider a strictly decreasing sequence 
$(R_k)_{k\in\N_0}$ of positive radii satisfying $R_0=2$ and
$R_k>1$ for all $k$.
Let $x_0\in\R^N$, $-\Lambda_*\leq\lambda\leq {\Lambda^*}$. 
By Lemma~\ref{lem:reg1bis}(b) with $t=(2^\ast)'$ and Lemma~\ref{lem:reg2},
we obtain $u\in W^{2,(2^\ast)'}_{\text{loc}}(\R^N)$ and constants $\tilde{C}_0=\tilde{C}_0(N,{\Lambda^*}, \Lambda_*)>0$,
$D=D\bigl(N,p,{r_Q},{\Lambda^*}\bigr)>0$ such that
\begin{align*}
\|u\|_{W^{2,(2^\ast)'}(B_{R_1}(x_0))}&\leq \tilde{C}_0 \left( \|u\|_{L^{(2^\ast)'}(B_2(x_0))}
+\|f+\varphi\|_{L^{(2^\ast)'}(B_2(x_0))}\right)\\
&\leq \tilde{C}_0\left(D\|f+\varphi\|_{p'}+\|f+\varphi\|_{L^{(2^\ast)'}(B_2(x_0))}\right)\\
&\leq C_0\left(\|f\|_{p'}+\|\varphi\|_\infty\right),
\end{align*}
where $C_0=C_0\bigl(N,p,{r_Q},{\Lambda^*},\Lambda_*\bigr)$.

From Sobolev's embedding theorem, there is for every $1\leq q\leq 2^\ast$ a constant 
$\kappa_q^{(0)}=\kappa_q^{(0)}(N,q)>0$ such that
$$
\|u\|_{L^q(B_{R_1}(x_0))}\leq \kappa_q^{(0)} C_0 \left(\|f\|_{p'}+\|\varphi\|_\infty\right).
$$
Consequently, with $t_1:=\frac{2^\ast}{p-1}$ there holds
$$
\|f\|_{L^{t_1}(B_{R_1}(x_0))}=\|Q|u|^{p-2}u\|_{L^{t_1}(B_{R_1}(x_0))}
\leq 2^{p-2}\|Q\|_\infty (\kappa_{2^\ast}^{(0)} C_0)^{p-1} \left(\|f\|_{p'}^{p-1}+\|\varphi\|_\infty^{p-1}\right).
$$
With Lemma~\ref{lem:reg1bis}(b)  it follows that $u\in W^{2,t_1}_{\text{loc}}(\R^N)$ and that
\begin{align*}
\|u\|_{W^{2,t_1}(B_{R_2}(x_0))}&\leq \tilde{C}_1 \left( \|u\|_{L^{t_1}(B_{R_1}(x_0))}+\|f+\varphi\|_{L^{t_1}(B_{R_1}(x_0))}\right)\\
&\leq \tilde{C}_1\Bigl[\kappa_{t_1}^{(0)}C_0\left(\|f\|_{p'}+\|\varphi\|_\infty\right)+\|\varphi\|_{L^{t_1}(B_{R_1}(x_0))}\\
&\qquad+2^{p-2}\|Q\|_\infty(\kappa_{2^\ast}^{(0)}C_0)^{p-1}\left(\|f\|_{p'}^{p-1}+\|\varphi\|_\infty^{p-1}\right)\Bigr]\\
&\leq C_1\left( \|f\|_{p'}+\|f\|_{p'}^{p-1}+\|\varphi\|_\infty+\|\varphi\|_\infty^{p-1}\right),
\end{align*}
where $C_1=C_1\bigl(N,p,{r_Q},{\Lambda^*}, \Lambda_*, \|Q\|_\infty\bigr)$.

If $t_1\geq \frac{N}{2}$, Sobolev's embedding theorem gives for each $1\leq q<\infty$ the existence of a constant 
$\kappa^{(1)}_q=\kappa_q^{(1)}(N,p,q)>0$ such that 
$$
\|u\|_{L^q(B_{R_2}(x_0))}\leq \kappa^{(1)}_q C_1\left(\|f\|_{p'}+\|f\|_{p'}^{p-1}+\|\varphi\|_\infty+\|\varphi\|_\infty^{p-1}\right).
$$ 
As a consequence, we obtain 
$$
\|f\|_{L^q(B_{R_2}(x_0))}\leq 4^{p-2}\|Q\|_\infty (\kappa_{q(p-1)}^{(1)} C_1)^{p-1} \left(\|f\|_{p'}^{p-1}+\|f\|_{p'}^{(p-1)^2}
+\|\varphi\|_\infty^{p-1}+\|\varphi\|_\infty^{(p-1)^2}\right)
$$
for all $1\leq q<\infty$. From Lemma~\ref{lem:reg1bis}(b) we obtain 
$u\in W^{2,N}_{\text{loc}}(\R^N)$ and since $R_2>1$,
\begin{align*}
\|u\|_{W^{2,N}(B_1(x_0))}&\leq \tilde{C}_2 \left( \|u\|_{L^N(B_{R_2}(x_0))}+\|f+\varphi\|_{L^N(B_{R_2}(x_0))}\right)\\
&\leq \tilde{C}_2\Bigl\{\kappa_N^{(1)}C_1\left(\|f\|_{p'}+\|f\|_{p'}^{p-1}+\|\varphi\|_\infty+\|\varphi\|_\infty^{p-1}\right)
+\|\varphi\|_{L^N(B_{R_2}(x_0))}\\
&\;+4^{p-2}\|Q\|_\infty(\kappa_{N(p-1)}^{(1)}C_1)^{p-1}
\left(\|f\|_{p'}^{p-1}+\|f\|_{p'}^{(p-1)^2}+\|\varphi\|_\infty^{p-1}+\|\varphi\|_\infty^{(p-1)^2}\right)\Bigr\}\\
&\leq C_2\left( \|f\|_{p'}+\|f\|_{p'}^{(p-1)^2}+\|\varphi\|_\infty+\|\varphi\|_\infty^{(p-1)^2}\right),
\end{align*}
where $C_2=C_2\bigl(N,p,{r_Q},{\Lambda^*}, \Lambda_*, \|Q\|_\infty\bigr)$.
By Sobolev's embedding theorem, there is a constant $\kappa_\infty=\kappa_\infty(N)>0$ for which
$$
\|u\|_{L^\infty(B_1(x_0))}\leq \kappa_\infty C_2\left( \|f\|_{p'}+\|f\|_{p'}^{(p-1)^2}+\|\varphi\|_\infty+\|\varphi\|_\infty^{(p-1)^2}\right).
$$
Hence the conclusion holds in this case with $C=\kappa_\infty C_2$ and $m=2$.

If $t_1<\frac{N}{2}$, we infer from Sobolev's embedding theorem that 
$$
\|u\|_{L^q(B_{R_2}(x_0))}\leq \kappa^{(1)}_q C_1\left(\|f\|_{p'}+\|f\|_{p'}^{p-1}+\|\varphi\|_\infty+\|\varphi\|_\infty^{p-1}\right)
$$ 
for each $1\leq q\leq \frac{Nt_1}{N-2t_1}$, where $\kappa_q^{(1)}=\kappa_q^{(1)}(N,p,q)$.
Therefore, setting $t_2:=\frac{Nt_1}{(N-2t_1)(p-1)}$, we obtain
$$
\|f\|_{L^{t_2}(B_{R_2}(x_0))}\leq 4^{p-2}\|Q\|_\infty (\kappa_{t_2(p-1)}^{(1)} C_1)^{p-1} 
\left(\|f\|_{p'}^{p-1}+\|f\|_{p'}^{(p-1)^2}+\|\varphi\|_\infty^{p-1}+\|\varphi\|_\infty^{(p-1)^2}\right).
$$
Using again Lemma~\ref{lem:reg1bis}(b), we find $u\in W^{2,t_2}_{\text{loc}}(\R^N)$
and
\begin{align*}
\|u\|_{W^{2,t_2}(B_{R_3}(x_0))}&\leq \tilde{C}_2 \left( \|u\|_{L^{t_2}(B_{R_2}(x_0))}+\|f+\varphi\|_{L^{t_2}(B_{R_2}(x_0))}\right)\\
&\leq \tilde{C}_2\Bigl\{\kappa_{t_2}^{(1)}C_1\left(\|f\|_{p'}+\|f\|_{p'}^{p-1}+\|\varphi\|_\infty+\|\varphi\|_\infty^{p-1}\right)
+\|\varphi\|_{L^{t_2}(B_{R_2}(x_0))}\\
&\;\; +4^{p-2}\|Q\|_\infty(\kappa_{t_2(p-1)}^{(1)}C_1)^{p-1}\left(\|f\|_{p'}^{p-1}+\|f\|_{p'}^{(p-1)^2}\!\!
+\|\varphi\|_\infty^{p-1}+\|\varphi\|_\infty^{(p-1)^2}\right)\Bigr\}\\
&\leq C_2\left( \|f\|_{p'}+\|f\|_{p'}^{(p-1)^2}+\|\varphi\|_\infty+\|\varphi\|_\infty^{(p-1)^2}\right),
\end{align*}
where $C_2=C_2\bigl(N,p,{r_Q}, {\Lambda^*}, \Lambda_*, \|Q\|_\infty\bigr)$.

Remarking that $t_2>t_1$, since $p<2^\ast$, we may iterate the procedure. At each step we find some constant 
$C_k=C_k\bigl(N,p,{r_Q}, {\Lambda^*}, \Lambda_*, \|Q\|_\infty\bigr)$ such that the estimate 
$$
\|u\|_{W^{2,t_k}(B_{R_{k+1}}(x_0))}\leq C_k\left(\|f\|_{p'}+\|f\|_{p'}^{(p-1)^k}
+\|\varphi\|_\infty+\|\varphi\|_\infty^{(p-1)^k}\right)
$$
holds and where $t_k$ is defined recursively via $t_0=(2^\ast)'$ and 
$t_{k+1}=\frac{Nt_k}{(N-2t_k)(p-1)}$, as long as $t_k<\frac{N}{2}$.
Since $t_{k+1}\geq \frac{t_1}{p'}\,t_k$ and since $t_1>p'$, we reach after finitely many steps 
$t_\ell\geq\frac{N}{2}$, where 
$\ell$ only depends on $N$ and $p$.
Since $R_k>1$ for all $k$, applying Lemma~\ref{lem:reg1bis}(b) one more time 
and arguing as above, we obtain $u\in W^{2,N}_{\text{loc}}(\R^N)$ as well as the estimate
$$
\|u\|_{W^{2,N}(B_1(x_0))}\leq C_{\ell+1}\left(\|f\|_{p'}+\|f\|_{p'}^{(p-1)^{\ell+1}}
+\|\varphi\|_\infty+\|\varphi\|_\infty^{(p-1)^{\ell+1}}\right),
$$
where $C_{\ell+1}=C_{\ell+1}\bigl(N,p,{r_Q},{\Lambda^*},\Lambda_*,\|Q\|_\infty\bigr)$.
Then, Sobolev's embedding theorem gives a constant $\kappa_\infty=\kappa_\infty(N)$ for which
$$
\|u\|_{L^\infty(B_1(x_0))}\leq \kappa_\infty C_{\ell+1}\left(\|f\|_{p'}+\|f\|_{p'}^{(p-1)^{\ell+1}}
+\|\varphi\|_\infty+\|\varphi\|_\infty^{(p-1)^{\ell+1}}\right)
$$
holds. Setting $m=\ell+1$ concludes the proof of \eqref{eq:infty-est}. 
Applying Lemma~\ref{lem:reg1bis}(b) once more, we see that $u \in W^{2,t}_{loc}(\R^N)$ for all $t <\infty$, and thus 
$u \in C^{1,\gamma}_{loc}(\R^N)$ for all $\gamma \in (0,1)$.
\end{proof}

In the following, we let 
$$
\Omega= \{x \in \R^N \::\: Q(x)>0\}
$$
Our next aim is to obtain a lower bound on the $L^\infty(\Omega)$-norm of nontrivial solutions of \eqref{eqn:fp_u-gen} with $\varphi \equiv 0$. 
 We have the following result: 

 \begin{lemma}
\label{sec:infinity-norm-lower-bounds}
Let ${\Lambda^*}>0$, $2< p \le 2^*$, and let $Q$ and $\Omega$ be as above. Then there exists a constant $\delta_0=\delta_0\bigl(N,p,{\Lambda^*}, Q\bigr)>0$ such that
$$
 \mR_\lambda (Q |u|^{p-2}u) \not = \tau u 
$$
for any $\lambda \le {\Lambda^*}$, $\tau \ge 1$ and $u \in L^\infty_{loc}(\R^N)$ with $0<\|u\|_{L^\infty(\Omega)}\le \delta_0$.
 \end{lemma}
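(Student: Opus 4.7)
The plan is to argue by contradiction and reduce to the unrescaled fixed point equation to which Proposition~\ref{prop:l_infty} applies. Suppose $\mR_\lambda(Q|u|^{p-2}u) = \tau u$ for some $\lambda \le \Lambda^*$, $\tau \ge 1$ and $u \in L^\infty_{loc}(\R^N)$ with $0 < \|u\|_{L^\infty(\Omega)} \le \delta_0$, and set $w := \tau^{-1/(p-2)} u$. Using the linearity of $\mR_\lambda$ and the $(p-1)$-homogeneity of $v \mapsto |v|^{p-2} v$, one verifies directly that $w = \mR_\lambda(Q|w|^{p-2}w)$; since $\tau^{-1/(p-2)} \le 1$ we still have $\|w\|_{L^\infty(\Omega)} \le \|u\|_{L^\infty(\Omega)} \le \delta_0$. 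Moreover $w$ is nontrivial, since otherwise $Q|w|^{p-2}w \equiv 0$ would force $w = \mR_\lambda(0) = 0$ and hence $u \equiv 0$. So it suffices to produce a lower bound $\delta_0 > 0$ on $\|w\|_{L^\infty(\Omega)}$, depending only on $N,p,\Lambda^*,Q$, valid for every nontrivial solution of this fixed point equation with $\lambda \le \Lambda^*$.

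For $\lambda \in [-1,\Lambda^*]$ I would apply Proposition~\ref{prop:l_infty} with $\varphi \equiv 0$ and $\Lambda_* = 1$ to obtain constants $C$ and $m$, depending only on $N,p,\Lambda^*,Q$, with
$$
\|w\|_\infty \le C\bigl(\|Q|w|^{p-1}\|_{p'} + \|Q|w|^{p-1}\|_{p'}^{(p-1)^m}\bigr).
$$
Since $(p-1)p' = p$, the pointwise estimate on $\text{supp }Q$ gives $\|Q|w|^{p-1}\|_{p'} \le K \|w\|_{L^\infty(\Omega)}^{p-1}$ with $K := \|Q\|_\infty |\text{supp }Q|^{1/p'}$. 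Inserting this and dividing by the positive number $\|w\|_{L^\infty(\Omega)} \le \|w\|_\infty$ yields
$$
1 \le CK\|w\|_{L^\infty(\Omega)}^{p-2} + CK^{(p-1)^m}\|w\|_{L^\infty(\Omega)}^{(p-1)^{m+1}-1}.
$$
Both exponents on the right are strictly positive because $p>2$, so the right-hand side tends to $0$ as $\|w\|_{L^\infty(\Omega)} \to 0^+$, giving a lower bound $\delta_0^{(1)} > 0$ valid throughout this $\lambda$-regime.

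The main obstacle is that Proposition~\ref{prop:l_infty}'s constant depends on the lower bound $\Lambda_*$ on $\lambda$, so the range $\lambda \le -1$ must be treated separately. For such $\lambda$ I would exploit the positivity of $\Psi_\lambda$ (Lemma~\ref{lem:bessel}(ii)) together with the identity $\|\Psi_\lambda\|_{L^1(\R^N)} = 1/|\lambda| \le 1$, which follows by integrating the distributional relation $(-\Delta - \lambda)\Psi_\lambda = \delta_0$ over $\R^N$. Young's inequality then gives
$$
\|w\|_\infty = \|\Psi_\lambda \ast (Q|w|^{p-2}w)\|_\infty \le \|\Psi_\lambda\|_1 \|Q\|_\infty \|w\|_{L^\infty(\Omega)}^{p-1} \le \|Q\|_\infty \|w\|_{L^\infty(\Omega)}^{p-1},
$$
and nontriviality forces $\|w\|_{L^\infty(\Omega)} \ge \|Q\|_\infty^{-1/(p-2)}$. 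Setting $\delta_0 := \min\bigl(\delta_0^{(1)}, \|Q\|_\infty^{-1/(p-2)}\bigr)$ then closes the argument.
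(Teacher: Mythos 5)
Your argument is essentially correct but takes a different and somewhat heavier route than the paper's. The paper simply applies Lemma~\ref{lem:reg2}: with $r>0$ chosen so that $\Omega\subset B_r(0)$, one gets
$\|u\|_{L^p(\Omega)}\le\|\mR_\lambda(Q|u|^{p-2}u)\|_{L^p(B_r(0))}\le D\|Q|u|^{p-2}u\|_{p'}\le D\|Q\|_\infty\|u\|_{L^p(\Omega)}^{p-1}$,
where the first inequality uses $\tau\ge 1$ directly (no rescaling needed), and after dividing by $\|u\|_{L^p(\Omega)}$ and crudely estimating $\|u\|_{L^p(\Omega)}\le|\Omega|^{1/p}\|u\|_{L^\infty(\Omega)}$ one reads off $\delta_0$. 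Because the constant $D$ in Lemma~\ref{lem:reg2} is uniform over all $\lambda\le\Lambda^*$, no case split in $\lambda$ is required. Your route instead rescales by $\tau^{-1/(p-2)}$ (a clean and correct reduction), then applies the full bootstrap Proposition~\ref{prop:l_infty} for $\lambda\in[-1,\Lambda^*]$ and a direct $L^1$ bound on $\Psi_\lambda$ via Young for $\lambda\le-1$; the $L^1$ computation $\|\Psi_\lambda\|_1=1/|\lambda|$ is correct (it is $\widehat\Psi_\lambda(0)=(-\lambda)^{-1}$, which is valid since $\Psi_\lambda\in L^1$ for $\lambda<0$). What this buys you is a somewhat self-contained estimate; what it costs is length and, more importantly, the following gap.

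The genuine gap is the endpoint $p=2^*$. The Lemma is stated for $2<p\le 2^*$, and the paper's proof covers this since Lemma~\ref{lem:reg2} is valid for $p\le 2^*$. Your argument in the range $\lambda\in[-1,\Lambda^*]$ invokes Proposition~\ref{prop:l_infty}, whose hypothesis requires $2<p<2^*$, so your lower bound $\delta_0^{(1)}$ is not established when $p=2^*$. (Your $\lambda\le -1$ branch via Young does handle $p=2^*$, since it only needs $p>2$.) The fix is easy --- replace the appeal to Proposition~\ref{prop:l_infty} by a direct appeal to Lemma~\ref{lem:reg2} as in the paper --- but as written the proof does not cover the full stated range of $p$.
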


 \begin{proof}
Let $r>0$ be such that $\Omega \subset B_r(0)$, and let $D>0$ be such that Lemma~\ref{lem:reg2} holds with the given values of $r, {\Lambda^*}, p$, $q=p$ and $r_0:={r_Q}$ defined by \eqref{eq:def-r-0}.  Moreover, put 
$$
\delta_0 := \frac{1}{2}\bigl(D \|Q\|_\infty\bigr)^{-\frac{1}{p-2}}  |\Omega|^{-\frac{1}{p}},
$$ 
and let $u \in L^\infty_{loc}(\R^N)$ with $\|u\|_{L^\infty(\Omega)}>0$ and $\tau \ge 1$ be such that  $ \mR_\lambda (Q |u|^{p-2}u)  = \tau u $. Then Lemma~\ref{lem:reg2} implies that 
 \begin{align*}
\|u\|_{L^p(\Omega)} \le \|\mR_\lambda (Q |u|^{p-2}u)\|_{L^p(\Omega)} \le \|\mR_\lambda (Q |u|^{p-2}u)\|_{L^p(B_r(0))} &\le D \|Q |u|^{p-2}u\|_{L^{p'}(\R^N)}\\
&\le D\|Q\|_\infty \|u\|_{L^p(\Omega)}^{p-1}
 \end{align*}
and hence, 
$$
1 \le D\|Q\|_\infty \|u \|_{L^p(\Omega)}^{p-2} \le D\|Q\|_\infty |\Omega |^{\frac{p-2}{p}} \|u \|_{L^\infty (\Omega)}^{p-2}.
$$
We thus conclude that $\|u\|_{L^\infty(\Omega)} \ge \bigl(D \|Q\|_\infty\bigr)^{-\frac{1}{p-2}}  |\Omega|^{-\frac{1}{p}} = 2 \delta_0>\delta_0$. This shows the claim.
 \end{proof}

\section{A priori bounds for nonnegative solutions}
\label{sec:priori-bounds-nonn}

As before, we assume $Q\in L^\infty_c(\R^N)$ is nonnegative, $Q\not\equiv 0$, let $\Omega=\{x\in\R^N\, :\, Q(x)>0\}$ and ${r_Q}=\text{diam}(\Omega)$. 
In this section we wish to derive $L^\infty$-bounds on nonnegative solutions of \eqref{eqn:fp_u-gen}. 
For this we put 
$$
\lambda_Q:= \left(\frac{y_{\frac{N-2}{2}}^{(1)}}{{r_Q}}\right)^2,
$$
and we focus our attention on the range  
\begin{equation}\label{eqn:lambda0}
\lambda\in(-\infty,\lambda_Q).
\end{equation} 
By Lemma~\ref{lem:bessel}(iii), it follows from \eqref{eqn:lambda0} that 
\begin{equation}
  \label{eq:pos-psi-lambda}
\Psi_\lambda(x-y)>0 \qquad \text{for all $x,y\in \overline \Omega$.}  
\end{equation}
As a consequence of \eqref{eq:pos-psi-lambda}, we immediately obtain the following positivity property for solutions of \eqref{eqn:fp_u-gen}.

\begin{lemma}\label{lem:positive}
Let $Q$, ${r_Q}$ and $\lambda_Q$ be as above, $\lambda < \lambda_Q$ and let $\varphi \in L^\infty_c(\R^N)$ be nonnegative. Then every nonnegative nontrivial solution $u \in L^p_{loc}(\R^N)$ of \eqref{eqn:fp_u-gen} is strictly positive in $\overline \Omega$.
\end{lemma}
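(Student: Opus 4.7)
The plan is to pass from the integral equation \eqref{eqn:fp_u-gen} to the differential inequality satisfied by $u$ and invoke the strong minimum principle. By Proposition~\ref{prop:l_infty} (applied with the given $\varphi$), any solution $u \in L^p_{loc}(\R^N)$ of \eqref{eqn:fp_u-gen} automatically lies in $L^\infty(\R^N) \cap C^{1,\gamma}_{loc}(\R^N) \cap W^{2,t}_{loc}(\R^N)$ for every $\gamma \in (0,1)$ and $t < \infty$, and is a strong solution of
\begin{equation*}
-\Delta u - \lambda u \;=\; Q u^{p-1} + \varphi \qquad \text{a.e.\ in } \R^N.
\end{equation*}
Since $u, Q, \varphi \geq 0$, the right-hand side is pointwise nonnegative.

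Setting $c := \max\{0,-\lambda\} \geq 0$ and adding the nonnegative quantity $(c + \lambda) u$ to both sides of $-\Delta u - \lambda u \geq 0$, I would obtain the strong differential inequality
\begin{equation*}
-\Delta u + c u \;\geq\; 0 \qquad \text{a.e.\ in } \R^N
\end{equation*}
with nonnegative zeroth order coefficient. The strong minimum principle for strong $W^{2,N}_{loc}$-supersolutions, applied on the connected open set $\R^N$ (see e.g.\ Gilbarg-Trudinger, Theorem 9.6), then yields the dichotomy: either $u \equiv 0$ on $\R^N$, or $u > 0$ everywhere in $\R^N$. Since $u$ is nontrivial by hypothesis, the second alternative holds, and in particular $u > 0$ on $\overline{\Omega}$, as required.

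I anticipate no real obstacle: the argument is a standard consequence of elliptic regularity together with the strong maximum principle. The only mild subtlety is the unified choice $c = \max\{0,-\lambda\}$, which handles both $\lambda \leq 0$ (where $c = -\lambda$ directly fits the classical $W^{2,N}$-version of the principle) and $\lambda > 0$ (where $c = 0$ and $u$ is genuinely superharmonic, since $-\Delta u = \lambda u + Q u^{p-1} + \varphi \geq 0$). The positivity \eqref{eq:pos-psi-lambda} of $\Psi_\lambda$ on $\overline{\Omega} \times \overline{\Omega}$ emphasized in the paragraph preceding the lemma would actually yield an even more direct proof via the integral representation when $\supp \varphi \subset \overline{\Omega}$: for $x \in \overline{\Omega}$ one then has $u(x) = \int_{\overline{\Omega}} \Psi_\lambda(x - y)(Q u^{p-1} + \varphi)(y)\,dy$, and strict positivity follows at once from $\Psi_\lambda > 0$ on $\overline{\Omega} \times \overline{\Omega}$ combined with the fact that $Q u^{p-1} + \varphi$ cannot vanish identically on $\overline{\Omega}$ (otherwise $u \equiv 0$, contradicting nontriviality). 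The strong minimum principle version given above has the advantage of covering arbitrary $\varphi \in L^\infty_c(\R^N)$.
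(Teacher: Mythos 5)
Your proof is correct, and it takes a genuinely different route from the paper's. The paper obtains Lemma~\ref{lem:positive} as an immediate consequence of the strict positivity \eqref{eq:pos-psi-lambda} of the kernel $\Psi_\lambda$ on $\oO\times\oO$: for $x\in\oO$ one simply reads off that $u(x)=\int\Psi_\lambda(x-y)\bigl(Qu^{p-1}+\varphi\bigr)(y)\,dy>0$ from the pointwise positivity of the integrand, which is the ``even more direct'' argument you mention in your last paragraph. You instead pass to the elliptic formulation via Proposition~\ref{prop:l_infty} and invoke the strong minimum principle with the coefficient $c=\max\{0,-\lambda\}\ge 0$. Both arguments are valid, but you are right to flag the difference in scope: for $\lambda>0$, the kernel-positivity route requires that the support of $\varphi$ stay within distance $y^{(1)}_{\frac{N-2}{2}}/\sqrt{\lambda}$ of $\oO$ (otherwise $\Psi_\lambda(x-y)$ may change sign over $\supp\varphi$ and the convolution could, a priori, have cancellations), whereas the maximum-principle route handles an arbitrary nonnegative $\varphi\in L^\infty_c(\R^N)$, matching the generality in which the lemma is stated. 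In the paper's actual applications the inhomogeneity is always $\varphi=tQ$, which is supported in $\oO$, so the authors' argument suffices there; your version is more robust and in fact yields the slightly stronger conclusion that $u>0$ on all of $\R^N$, not merely on $\oO$.
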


In the following, it will be useful to reformulate problem~\eqref{eqn:fp_u-gen}. For this we consider, for $\lambda \in \R$, the linear operator
$\mK_\lambda:  C(\oO) \to C(\oO)$ defined by 
\begin{equation}
  \label{eq:def-K-lambda}
\mK_\lambda(f)=[\Psi_\lambda \ast (Qf)]\Big|_{\oO}\qquad \text{for $f \in C(\oO)$.}
\end{equation}
Here and in the following, the convolution with $Qf$ is understood by considering the trivial extension of $Qf$ on $\R^N \setminus \Omega$, i.e.,  
\begin{equation}
  \label{eq:extension-formula}
[\Psi_\lambda \ast (Qf)](x) = \int_{\Omega} \Psi_\lambda(x-y) Q(y)f(y)\,dy \qquad \text{for $x \in \R^N$.}
\end{equation}
Since $Qf \in L^\infty(\Omega)$ and the singularity of $\Psi_\lambda$ is of the order 
$\Psi_\lambda(z)= O(|z|^{2-N})$ as $z \to 0$, it is easy to see that $\Psi_\lambda \ast (Qf)$ defines a continuous function for $f \in C(\oO)$. Hence $\mK_\lambda: C(\oO) \to C(\oO)$ is well defined. 
Moreover, there is a one-to-one correspondence between solutions of 
\begin{equation}\label{eqn:fp_u}
u=\Psi_\lambda \ast \left(Q|u|^{p-2}u\right), \qquad u \in L^p_{loc}(\R^N)
\end{equation}
and solutions of the operator equation
\begin{equation}
  \label{eq:K-equation}
u = \mK_\lambda(|u|^{p-2}u), \qquad u \in C(\oO).
\end{equation}
Indeed, if $u \in L^p_{loc}(\R^N)$ solves \eqref{eqn:fp_u}, then $u$ is continuous by Proposition~\ref{prop:l_infty}, and the restriction of $u$ to $\Omega$ satisfies \eqref{eq:K-equation}. On the other hand, if $u \in C(\oO)$ solves \eqref{eq:K-equation},  we may extend $u$ on $\R^N$ by \eqref{eq:extension-formula} with $f= |u|^{p-2}u$, and then $u$ satisfies \eqref{eqn:fp_u}. We need following properties of the operators $\mK_\lambda$, $\lambda \in \R$.

\begin{lemma}
\label{comp-strictly-positive}
For every $\lambda \in \R$, we have: 
\begin{itemize}
\item[(a)] $\mK_\lambda$ is a compact linear operator.  
\item[(b)] If $\lambda < \lambda_Q$, then $\mK_\lambda$ is strictly positivity preserving, i.e., we have 
$\min \limits_{\oO} \mK_\lambda(f)>0$ for every nonnegative nontrivial function $f \in C(\overline \Omega)$. 
\end{itemize}
\end{lemma}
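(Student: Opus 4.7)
The plan is to handle (a) and (b) separately, working directly with
$$
\mK_\lambda(f)(x)=\int_\Omega \Psi_\lambda(x-y)Q(y)f(y)\,dy,\qquad x\in\oO,
$$
and the bounds on $\Psi_\lambda$ from Lemma~\ref{lem:bessel}. For (a), linearity is immediate. For boundedness $C(\oO)\to C(\oO)$, I would use that $\Psi_\lambda$ is locally integrable (the singularity at the origin being of order $|z|^{2-N}$) and that $\Omega$ is bounded, yielding
$$
M_\lambda:=\sup_{x\in\oO}\int_\Omega|\Psi_\lambda(x-y)|\,dy<\infty,
$$
and hence $\|\mK_\lambda f\|_\infty\le M_\lambda\|Q\|_\infty\|f\|_\infty$.

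For compactness, I would invoke Arzel\`a--Ascoli. Uniform boundedness of $\{\mK_\lambda f:\|f\|_\infty\le 1\}$ follows from the above, so the crux is equicontinuity. This would come from a standard splitting argument for weakly singular convolution kernels: for $x_1,x_2\in\oO$ and $\delta>0$, write
$$
|\mK_\lambda f(x_1)-\mK_\lambda f(x_2)|\le \|Q\|_\infty\int_\Omega|\Psi_\lambda(x_1-y)-\Psi_\lambda(x_2-y)|\,dy
$$
and decompose $\Omega$ into $A_\delta:=\Omega\cap(B_\delta(x_1)\cup B_\delta(x_2))$ and its complement. On $A_\delta$, each of the two terms $|\Psi_\lambda(x_i-y)|$ is separately controlled by $\int_{B_{2\delta}(0)}|\Psi_\lambda(z)|\,dz$, which tends to zero as $\delta\to 0^+$ uniformly in $x_1,x_2$. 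On $\Omega\setminus A_\delta$, both arguments stay in the compact set $\{\delta/2\le|z|\le {r_Q}+\delta\}$ where $\Psi_\lambda$ is uniformly continuous, so for $|x_1-x_2|$ small (relative to $\delta$) the integrand becomes uniformly small. Choosing $\delta$ first and then $|x_1-x_2|$ yields equicontinuity, completing (a).

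For (b), the key ingredient is already in place: when $\lambda\le 0$, Lemma~\ref{lem:bessel}(ii) gives $\Psi_\lambda>0$ on $\R^N\setminus\{0\}$, while for $0<\lambda<\lambda_Q$ the defining bound $\sqrt{\lambda}\,{r_Q}<y^{(1)}_{(N-2)/2}$ allows Lemma~\ref{lem:bessel}(iii) to yield $\Psi_\lambda(x-y)>0$ for all $x,y\in\oO$; this is the content of \eqref{eq:pos-psi-lambda}. Given $f\in C(\oO)$ nonnegative and not identically zero, continuity makes $f$ strictly positive on some nonempty relatively open subset of $\oO$; by density of $\Omega$ in $\oO$, this subset meets $\Omega$ in a nonempty open set $U\subset\Omega$ on which both $Q$ and $f$ are strictly positive, so $Qf>0$ on $U$ and $|U|>0$. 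For any $x\in\oO$ the integrand $y\mapsto \Psi_\lambda(x-y)Q(y)f(y)$ is then nonnegative on $\Omega$ and strictly positive on $U\setminus\{x\}$, hence $\mK_\lambda(f)(x)>0$. Since $\mK_\lambda(f)\in C(\oO)$ by (a) and $\oO$ is compact, the minimum is attained and must be strictly positive.

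I anticipate the equicontinuity step in (a) to be the main technical obstacle, as it is where one must balance the singularity of $\Psi_\lambda$ against its regularity away from the origin; part (b) then follows painlessly by combining the pointwise positivity of the kernel with the continuity of $\mK_\lambda(f)$ supplied by (a).
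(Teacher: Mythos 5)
Your proof is correct, and for part (a) it takes a genuinely different route from the paper. The paper establishes $\|\Psi_\lambda\ast(Qf)\|_{W^{2,N}(B_R(0))}\leq C\|Qf\|_\infty$ via the elliptic regularity estimate of Lemma~\ref{lem:reg1bis} (respectively \cite[Proposition A.1]{evequoz-weth-dual}), then invokes Sobolev embedding and Rellich--Kondrachov to obtain compactness into $C(\overline{B_R(0)})$. You instead prove compactness directly through Arzel\`a--Ascoli, verifying equicontinuity by the classical split-kernel argument for weakly singular convolutions: near the singularity you control both $\Psi_\lambda(x_i-\cdot)$ by a small-ball integral that vanishes as $\delta\to0^+$, and away from it you use uniform continuity of $\Psi_\lambda$ on an annulus. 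Both are valid; your approach is more elementary and self-contained (it avoids any Calder\'on--Zygmund input), while the paper's is shorter because it recycles machinery already set up for other purposes. One minor imprecision in your write-up: the bound $\int_{A_\delta}|\Psi_\lambda(x_i-y)|\,dy\leq\int_{B_{2\delta}(0)}|\Psi_\lambda|$ tacitly requires $|x_1-x_2|\leq\delta$, and the annulus on $\Omega\setminus A_\delta$ should be $\{\delta\leq|z|\leq r_Q+\delta\}$ rather than $\{\delta/2\leq|z|\}$; these are cosmetic and do not affect the argument since you choose $|x_1-x_2|$ small after fixing $\delta$. For part (b) your argument coincides with the paper's, spelling out the measure-theoretic positivity step which the paper leaves implicit.
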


\begin{proof}
(a) Choosing $R>0$ such that $\oO\subset B_R(0)$, 
a standard bootstrap argument (using  Lemma~\ref{lem:reg1bis}  or
\cite[Proposition A.1]{evequoz-weth-dual}) provides the estimate
$$
\|\Psi_\lambda\ast\left(Q f\right)\|_{W^{2,N}(B_R(0))}\leq C \|Q f\|_\infty, \quad\text{for all }f \in C(\oO),
$$
where $C$ does not depend on $f$.
By the Rellich-Kondrachov theorem, this implies the compactness of the operator 
$f\mapsto \Psi_\lambda\ast\left(Q  f\right)$ between the spaces $C(\oO)$ and $C(\overline{B_R(0)})$,
and from this the compactness of $\mK_\lambda$ follows.\\
(b) If $\lambda < \lambda_Q$ and $f\in C(\oO)$ is nonnegative with $f\not\equiv 0$, then \eqref{eq:pos-psi-lambda} implies
that $\Psi_\lambda \ast(Qf)>0$ in $\oO$. Since $\oO$ is compact, it follows that 
$\min \limits_{\oO} \mK_\lambda(f)= \min \limits_{\oO} \Psi_\lambda \ast(Qf) >0$.
\end{proof}

We now state our first result on $L^\infty$-bounds on nonnegative solutions of \eqref{eqn:fp_u-gen}.  It is based on assumption $(A1)$ from the introduction and therefore requires a restriction of the range of admissible exponents $p$.

\begin{proposition}\label{prop:a_priori_w_restr}
Let $Q$, $\Omega$ be as above, let $2<p<\frac{2^\ast}{2}+1$, and let $\Lambda_*,\Lambda^*>0$ be such that ${\Lambda^*}< \lambda_Q$. Then there exists a constant $C>0$ such that 
\begin{equation}\label{eqn:uniform_sup_w}
\|u\|_{L^\infty(\overline \Omega)}+t\leq C
\end{equation} 
for all solutions $(t,u)\in[0,\infty)\times C(\oO)$, $u \ge 0$ 
of the problem
\begin{equation}\label{eqn:fp_w_modif}
u=\mK_\lambda(u^{p-1}+t)
\end{equation}
with $\lambda\in[-\Lambda_*,{\Lambda^*}]$.
\end{proposition}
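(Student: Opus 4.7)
The plan is to combine a Brezis--Turner-style testing against a principal eigenfunction of $\mK_\lambda$ with an integrability bootstrap terminating in Proposition~\ref{prop:l_infty}. By Lemma~\ref{comp-strictly-positive}, $\mK_\lambda$ is compact and strictly positivity preserving on $C(\oO)$ for every $\lambda<\lambda_Q$; the symmetry $\Psi_\lambda(x-y)=\Psi_\lambda(y-x)$ further renders it self-adjoint with respect to the weighted pairing $\langle f,g\rangle_Q := \int_\Omega fg\,Q\,dx$. The Krein--Rutman theorem then provides a principal eigenvalue $\nu_1(\lambda)>0$ with a positive eigenfunction $\phi_\lambda\in C(\oO)$, normalized by $\int_\Omega Q\phi_\lambda\,dx=1$. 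Since $\lambda\mapsto\Psi_\lambda$ is continuous (Lemma~\ref{lem:bessel}(i)), the map $\lambda\mapsto\mK_\lambda$ is norm-continuous, and standard perturbation theory for simple eigenvalues of compact operators produces constants $\nu_*,\phi_*>0$ with $\nu_1(\lambda)\geq\nu_*$ and $\min_{\oO}\phi_\lambda\geq\phi_*$ for every $\lambda\in[-\Lambda_*,\Lambda^*]\subset(-\infty,\lambda_Q)$.

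Testing $u=\mK_\lambda(u^{p-1}+t)$ against $Q\phi_\lambda$ and invoking self-adjointness with $\mK_\lambda\phi_\lambda=\nu_1(\lambda)\phi_\lambda$ gives
\[
\int_\Omega u\,Q\phi_\lambda\,dx \;=\; \nu_1(\lambda)\int_\Omega u^{p-1}Q\phi_\lambda\,dx \,+\, \nu_1(\lambda)\,t.
\]
Writing $A:=\int_\Omega u\,Q\phi_\lambda\,dx$ and $B:=\int_\Omega u^{p-1}Q\phi_\lambda\,dx$, H\"older's inequality with conjugate exponents $p-1$ and $(p-1)/(p-2)$ (using $\int Q\phi_\lambda=1$) yields $A\leq B^{1/(p-1)}$. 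Combined with $A=\nu_1 B+\nu_1 t\geq \nu_1\max(B,t)$ this forces $A^{(p-2)/(p-1)}\leq \nu_1^{-1/(p-1)}$, hence $\nu_*$-uniform bounds on $A$, $B$ and $t$; since $\phi_\lambda\geq\phi_*$ we obtain $\|Qu^{p-1}\|_{L^1(\R^N)}+t \leq M_1$. Extending $u$ to $\R^N$ via the integral formula so that $u=\mR_\lambda(Qu^{p-1}+tQ)$ with right-hand side in $L^1_c(\R^N)$, the pointwise estimate $|\Psi_\lambda(x)|\leq \gamma_N\Psi_0(x)+\zeta_N(\Lambda^*)^{(N-3)/4}|x|^{(1-N)/2}$ from Lemma~\ref{lem:bessel}(v) together with the weak Young inequality yields $\|u\|_{L^s(B_R(0))}\leq C$ uniformly for every $s<N/(N-2)$ and $R>0$. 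The hypothesis $p<\tfrac{2^*}{2}+1$, equivalent to $p-1<N/(N-2)$, then lets us pick $s_1\in(p-1,N/(N-2))$, producing $\|Qu^{p-1}\|_{L^{s_1/(p-1)}}\leq C$ with exponent strictly greater than $1$. Iterating the convolution estimates of Lemmas~\ref{lem:reg1bis} and~\ref{lem:reg2}, which afford a Sobolev-type gain of $2/N$ per step, an elementary analysis of the recursion $a_{k+1}=(p-1)a_k-2/N$ for the exponents $a_k = 1/s_k$ shows that finitely many iterations lead to $\|Qu^{p-1}\|_{L^{p'}(\R^N)}\leq C$ uniformly in $\lambda$. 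Proposition~\ref{prop:l_infty} applied with $\varphi=tQ$ then delivers a uniform bound on $\|u\|_\infty$, which combined with $t\leq M_1$ gives \eqref{eqn:uniform_sup_w}.

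The main obstacle is ensuring $\lambda$-uniformity throughout: the lower bound $\nu_1(\lambda)\geq\nu_*>0$, the positivity of $\phi_\lambda$, and the locally uniform Bessel-kernel estimates from Section~\ref{sec:resolv} all crucially exploit $\Lambda^*<\lambda_Q$. The subcritical bound $p<\tfrac{2^*}{2}+1$ is used only to launch the bootstrap from pure $L^1$ data; it is precisely what makes the first iteration yield an exponent strictly greater than $1$ and what guarantees that the sequence $a_k$ governing the integrability exponents drives to $-\infty$ in finitely many steps.
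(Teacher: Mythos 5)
Your proposal is correct in overall strategy but departs from the paper's argument in two substantive ways, and both departures are workable.

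First, where you invoke Krein--Rutman for $\mK_\lambda$ at each $\lambda$ and then rely on perturbation theory for the simple principal eigenvalue to get uniform positive lower bounds on $\nu_1(\lambda)$ and $\min_{\oO}\phi_\lambda$, the paper instead fixes the single eigenfunction $f_1$ of $\mK_0$ and transfers the $\lambda$-dependence onto the \emph{kernel} via the sandwich $\eps_0\Psi_0\leq\Psi_\lambda\leq\gamma_N\Psi_0$ of Lemma~\ref{lem:bessel}, so that the function $u_0=\mK_0(u^{p-1}+t)$ is pinched between $\eps_0^{-1}u$ and $\gamma_N^{-1}u$; no perturbation theory is needed. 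Your route works (Lemma~\ref{sec:glob-branch-posit-1} even records the norm continuity of $\lambda\mapsto\mK_\lambda$ that you need), but it is heavier than necessary, and you should at least remark that uniform positivity of $\min_{\oO}\phi_\lambda$ follows from continuity of $\lambda\mapsto\phi_\lambda$ in $C(\oO)$ combined with compactness of $[-\Lambda_*,\Lambda^*]$.

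Second, and more importantly, once the $L^1$ bound $\|Qu^{p-1}\|_1+t\leq M_1$ is in hand, the paper does \emph{not} iterate. It exploits the testing-type identity once more to obtain $\int_\Omega Qu^p\,dx\leq\gamma_N\int v(\Psi_0*v)\,dx$ with $v=Qu^{p-1}+tQ$, estimates the right-hand side by the weak Young inequality as $\lesssim\|v\|_{(2^*)'}^2$, and then interpolates $\|v\|_{(2^*)'}$ between the known $\|v\|_1$ and $\|v\|_{p'}^{1/p'}\lesssim(\int Qu^p)^{1/p'}$; the hypothesis $p<\tfrac{2^*}{2}+1$ is used exactly to make the resulting power $\tfrac{2\alpha}{p'}=\tfrac{2(p-1)}{2^*}$ strictly less than $1$, so that $\int Qu^p$ can be absorbed. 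This is a one-shot argument. Your iterative bootstrap is also viable, and the numerology matches: the same inequality $p-1<N/(N-2)$ that launches your bootstrap from $L^1$ is what guarantees $\tfrac{2\alpha}{p'}<1$ in the paper. However, you should correct the recursion: from $v\in L^{s_k}$ one gets $u\in L^{r_k}$ with $\tfrac{1}{r_k}=\tfrac{1}{s_k}-\tfrac{2}{N}$ and then $v\in L^{r_k/(p-1)}$, so in terms of $a_k=1/s_k$ the iteration is $a_{k+1}=(p-1)a_k-\tfrac{2(p-1)}{N}$, not $(p-1)a_k-\tfrac{2}{N}$ as written. With the correct recursion the fixed point is $a^*=\tfrac{2(p-1)}{N(p-2)}$, which exceeds $1$ precisely when $p<\tfrac{2^*}{2}+1$, so the iterates starting from any $a_0\leq 1$ decrease and exit in finitely many steps; with the coefficient you wrote, the fixed point $\tfrac{2}{N(p-2)}$ can fall below $1$ on part of the admissible $p$-range and the scheme would not visibly terminate. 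The conclusion survives, but the bookkeeping as stated would not. Apart from these points, the structure (Brezis--Turner testing, then regularity bootstrap terminating in Proposition~\ref{prop:l_infty}) is sound and the $\lambda$-uniformity is handled correctly.
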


\begin{proof}
We first observe that, as a consequence of Lemma~\ref{comp-strictly-positive} and the Krein-Rutman Theorem (see e.g. \cite[Theorem 7.C]{zeidler}), 
the operator $\mK_0$ has a strictly positive eigenfunction $f_1\in C(\oO)$ associated to the 
positive eigenvalue $\nu_1=r(\mK_0)>0$, i.e., we have 
$$
\mK_0 f_1 = \Psi_0 \ast (Qf_1)= \nu_1 f_1. 
$$ 
Here 
$r(\mK_0)$ denotes the spectral radius of $\mK_0$. Let $\lambda\in[-\Lambda_*,{\Lambda^*}]$ and consider $t\geq 0$ and $ u\in C(\oO)$ nonnegative
such that \eqref{eqn:fp_w_modif} holds. We first note that in case $u=0$ we have $0=\mK_\lambda(t)= t \mK_\lambda(1)$, and this yields $t=0$ by Lemma~\ref{comp-strictly-positive}(b). We may thus assume that $u \not= 0$ from now on, and then Lemma~\ref{comp-strictly-positive}(b) implies that $u$ is strictly positive in $\oO$.
Moreover, the function 
$$
u_0:= \mK_0(u^{p-1}+t)= \Psi_0 * (Qu^{p-1}+tQ)
$$ 
is also strictly positive in $\overline \Omega$, and by Lemma~\ref{lem:bessel} (ii)--(iv) we have 
$$
0< \eps _0 \Psi_0(x-y) \le \Psi_\lambda(x-y) \le \gamma_N \Psi_0(x-y) \qquad \text{for $x,y\in \oO$, $\lambda\in[-\Lambda_*,{\Lambda^*}]$}
$$ 
and therefore  
\begin{equation}\label{eqn:ineq_u0}
\eps_0 u_0\leq u\leq \gamma_N u_0\quad\text{ in }\oO, 
\end{equation}
where $\gamma_N$ is given in \eqref{eq:def-gamma-N} and  $\eps_0=\eps_0(\Lambda_*,\Lambda^*,r_Q)>0$. We thus find that 
\begin{align*}
\nu_1\!  \int_{\Omega}(Qu^{p-1}&+tQ) f_1\, dx =  \int_{\Omega}(Qu^{p-1}+tQ)[\Psi_0 * (Q f_1)]\, dx=
\int_{\Omega}[\Psi_0 * (Qu^{p-1}+tQ)] (Q f_1)\, dx\\
&=  \int_{\Omega}u_0 (Q f_1)\, dx
  \leq \frac1{\eps_0}\int_{\Omega}Qu f_1\, dx \leq \frac1{\eps_0}\left(\int_{\Omega}Qu^{p-1}f_1\, dx\right)^{\frac1{p-1}}
\left(\int_{\Omega}Qf_1\, dx\right)^{\frac{p-2}{p-1}},
\end{align*}
using H\"older's inequality. As a consequence, 
$$
\int_{\Omega}Qu^{p-1}f_1\, dx\leq \frac1{(\eps_0\nu_1)^{\frac{p-1}{p-2}}}\int_{\Omega}Qf_1\, dx,
$$
and since $\min \limits_{\oO} f_1 >0$, we obtain the estimates
\begin{align}
\int_{\Omega}Qu^{p-1}\, dx&\leq \frac1{\min\limits_{\overline{\Omega}}f_1\ (\eps_0\nu_1)^{\frac{p-1}{p-2}}}
\int_{\Omega}Qf_1\, dx=:\kappa_1, \label{eqn:u_p-1}\\
t&\leq \frac1{(\eps_0\nu_1)^{\frac{p-1}{p-2}}}=:\kappa_2,\label{eqn:t}
\end{align}
noting that $\kappa_1, \kappa_2$ are independent of $\lambda\in[-\Lambda_*,{\Lambda^*}]$. In particular, setting $v:= Q u^{p-1}+ tQ$, we find that 
\begin{equation}
  \label{eq:l-1-est-v}
 \|v\|_{L^1(\Omega)} \le \kappa_1 + \kappa_2 \|Q\|_{L^1(\Omega)}, 
\end{equation}
and that 
\begin{equation}
  \label{eq:l-p-prime-est-v}
 \|v\|_{L^{p'}(\Omega)} \le \|Q\|_{L^\infty(\Omega)}^{\frac{1}{p}} \Bigl(\int_{\Omega}Qu^p\, dx\Bigr)^{\frac{1}{p'}}   + \kappa_2 \|Q\|_{L^{p'}(\Omega)}.
\end{equation}
Using \eqref{eqn:ineq_u0}, \eqref{eq:l-1-est-v} and \eqref{eq:l-p-prime-est-v} together with the fact that $u_0:= \Psi_0 * v$ in $\Omega$, we find that 
\begin{align*}
&\int_{\Omega}Qu^p\, dx \le \int_{\Omega}Qu^p\, dx + t\int_{\Omega}Qu\, dx \leq \gamma_N \int_{\Omega}v u_0\, dx= \gamma_N \int_{\Omega} v\, (\Psi_0 * v)\, dx\\
&\qquad \le \gamma_N \|\Psi_0\|_{\frac{N}{N-2},w} \|v\|_{L^{(2^*)'}(\Omega)}^2 \le \gamma_N \|\Psi_0\|_{\frac{N}{N-2},w} \|v\|_{L^{1}(\Omega)}^{2-2\alpha}  \|v\|_{L^{p'}(\Omega)}^{2\alpha} \\
&\qquad\le \gamma_N \|\Psi_0\|_{\frac{N}{N-2},w} 
\Bigl(\kappa_1 + \kappa_2 \|Q\|_{L^1(\Omega)}\Bigr)^{2-2\alpha} 
\Bigl( \|Q\|_{L^\infty(\Omega)}^{\frac{1}{p}} \Bigl(\int_{\Omega}Qu^p\, dx\Bigr)^{\frac{1}{p'}}   + \kappa_2 \|Q\|_{L^{p'}(\Omega)}\Bigr)^{2\alpha}. 
\end{align*}
Here, as in the proof of Lemma~\ref{lem:reg1bis-0},  $\|\Psi_0\|_{\frac{N}{N-2},w}$ denotes the weak $L^{\frac{N}{N-2},w}(\R^N)$-norm of $\Psi_0$, and the weak Young inequality has been used. Moreover, the exponent $\alpha$ arising from interpolation via H{\"o}lder's inequality is given by $\alpha = \frac{1-\frac{1}{(2^*)'}}{1-\frac{1}{p'}}$.   Since by assumption $p<\frac{2^\ast}2+1$, it follows that $\frac{2\alpha}{p'} \in (0,1)$. It thus follows from the latter estimate that there exists a constant $\kappa_3>0$, depending only on $N$, $Q$ and $p$ but not on $\lambda\in[-\Lambda_*,{\Lambda^*}]$ and $u$, such that 
\begin{equation}\label{eqn:u_p}
\int_{\Omega}Qu^p\, dx\leq \kappa_3.
\end{equation}
Extending $u$ to $\R^N$ by \eqref{eq:extension-formula} with $f=u^{p-1}+t$ and using Proposition~\ref{prop:l_infty}, we find that
$$
\|u\|_{L^\infty(\R^N)} \leq C\left(\|Q\|_\infty^p\kappa_3^{\frac1{p'}}+\|Q\|_\infty^{p(p-1)^m}\kappa_3^{\frac{(p-1)^m}{p'}} 
+\|Q\|_\infty \kappa_2 + \|Q\|_\infty^{(p-1)^m}\kappa_2^{(p-1)^m}\right)=:\kappa_4
$$
where the constant on the right-hand side is independent of $u$ and $\lambda\in[-\Lambda_*,{\Lambda^*}]$.
Together with  \eqref{eqn:t} this gives the uniform bound
\begin{equation}\label{eqn:w_a_priori}
\|u\|_{L^\infty(\overline \Omega)} +t\leq \kappa_4 +\kappa_2=:C
\end{equation}
for all nonnegative solutions $(t,u)$ of \eqref{eqn:fp_w_modif} with $\lambda\in[-\Lambda_*,{\Lambda^*}]$.
This concludes the proof.
\end{proof}

Next, we wish to derive a further result on $L^\infty$-bounds for nonnegative solutions of \eqref{eq:K-equation} which is based on assumption $(A2)$ from the introduction and therefore applies to all subcritical exponents $p$.  This result is inspired by work of Amann, L\'opez-G\'omez~\cite{amann-lopez98} and Berestycki, Capuzzo-Dolcetta and 
Nirenberg~\cite{berestycki-cd-nirenberg94} for indefinite semilinear elliptic problems on bounded domains. 

\begin{proposition}\label{prop:a_priori_w_all_p}
Let $Q$, $\Omega$ be as above, let $2<p<2^\ast$, and let $\Lambda_*,\Lambda^*>0$ be such that ${\Lambda^*}< \lambda_Q$. Assume furthermore that $\partial \Omega$ is of class $C^1$, and assume that there exists $\alpha>0$ and a positive function $\beta \in  C(\R^N)$ such that
\begin{equation}\label{eqn:Q_dist}
Q(x)= \beta(x) \text{dist}(x, \R^N \! \setminus \!\Omega)^\alpha \qquad \text{ for all $x\in \R^N$.}
\end{equation}
Then there exists a constant $C>0$ such that 
\begin{equation}
\label{eqn:uniform_sup_w_unrestr}
\|u\|_{L^\infty(\overline \Omega)}+t\leq C
\end{equation} 
for all solutions $(t,u)\in[0,\infty)\times C(\oO)$ of the problem~\eqref{eqn:fp_w_modif} with
$u \geq 0$ in $\Omega$.
\end{proposition}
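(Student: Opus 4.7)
The plan is to argue by contradiction via the blow-up (rescaling) technique of Gidas--Spruck~\cite{gidas.spruck81}, as adapted to indefinite problems in~\cite{berestycki-cd-nirenberg94,amann-lopez98}, combined with the weighted Liouville theorem of Du--Li~\cite{du.li:05} to absorb the degeneracy of $Q$ at $\partial\Omega$. Suppose no such $C$ exists; then one obtains a sequence $(\lambda_n,t_n,u_n)\in[-\Lambda_*,\Lambda^*]\times[0,\infty)\times C(\oO)$ of nonnegative solutions of \eqref{eqn:fp_w_modif} with $\|u_n\|_{L^\infty(\oO)}+t_n\to\infty$. From $u_n\ge t_n \mK_{\lambda_n}(1)$ together with Lemma~\ref{lem:bessel}(iv), $\mK_{\lambda_n}(1)$ admits a positive lower bound on $\oO$ uniform in $\lambda_n\in[-\Lambda_*,\Lambda^*]$, hence $t_n\le c_0^{-1}\|u_n\|_{L^\infty(\oO)}$ for some $c_0>0$. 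One may therefore assume $\widetilde M_n:=\|u_n\|_{L^\infty(\oO)}\to\infty$. Select $x_n\in\oO$ with $u_n(x_n)=\widetilde M_n$, set $d_n:=\mathrm{dist}(x_n,\partial\Omega)$, and extract a subsequence such that $x_n\to x_\infty\in\oO$ and $\lambda_n\to\lambda_\infty$.

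Introduce the rescaling $v_n(y):=\widetilde M_n^{-1}u_n(x_n+\mu_n y)$ with $\mu_n\to 0$ chosen according to whether $D_n:=\widetilde M_n^{(p-2)/2}d_n^{(\alpha+2)/2}$ diverges or stays bounded. In the \emph{interior regime} $D_n\to\infty$ I take $\mu_n=(\widetilde M_n^{p-2}\beta(x_n)d_n^\alpha)^{-1/2}$, so that $\mu_n/d_n\to 0$ and the rescaled domain $\Omega_n:=\mu_n^{-1}(\Omega-x_n)$ exhausts $\R^N$. In the \emph{boundary regime} ($D_n$ bounded) I take $\mu_n=(\widetilde M_n^{p-2}\beta(x_n))^{-1/(\alpha+2)}$, so that $d_n/\mu_n\to c\in[0,\infty)$ along a subsequence; necessarily $x_\infty\in\partial\Omega$, and the $C^1$-regularity of $\partial\Omega$ at $x_\infty$ forces $\Omega_n$ to converge to the half-space $H=\{y\cdot\nu_\infty>-c\}$, with $\nu_\infty$ the inward unit normal to $\partial\Omega$ at $x_\infty$. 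Using the representation $Q=\beta\,\mathrm{dist}(\,\cdot\,,\R^N\setminus\Omega)^\alpha$, a direct computation yields $v_n(0)=1$, $0\le v_n\le 1$ on $\Omega_n$, and
\begin{equation*}
-\Delta v_n - \mu_n^2\lambda_n v_n \;=\; W_n(y)\, v_n^{p-1} \,+\, \mu_n^2\widetilde M_n^{-1}t_n\, Q(x_n+\mu_n y)\qquad \text{in }\R^N,
\end{equation*}
where $W_n:=\mu_n^2\widetilde M_n^{p-2}Q(x_n+\mu_n\cdot)$ tends to $1$ locally uniformly in the interior regime, and to $(y\cdot\nu_\infty+c)_+^{\alpha}$ locally uniformly in the boundary regime. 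The additive $t_n$-term vanishes in the limit since $\mu_n^2 t_n/\widetilde M_n\le c_0^{-1}\mu_n^2\|Q\|_\infty\to 0$.

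To pass to the limit, local $L^\infty$-control of $v_n$ outside $\Omega_n$ is obtained from the integral representation $u_n=\Psi_{\lambda_n}\ast(Q(u_n^{p-1}+t_n))$: after rescaling, using $\mu_n^2\lambda_n\to 0$ so that the rescaled kernel behaves like $\Psi_0$, Lemma~\ref{lem:reg2} provides uniform bounds on $v_n$ over compact subsets of $\R^N$, and Lemma~\ref{lem:reg1bis}(b) together with the bootstrap of Proposition~\ref{prop:l_infty} upgrade these to $C^{1,\gamma}_{loc}$-compactness. Along a further subsequence, $v_n\to v$ in $C^1_{loc}(\R^N)$, where $v$ is nonnegative, bounded, satisfies $v(0)=1$, and solves $-\Delta v=v^{p-1}$ on $\R^N$ in the interior regime, or $-\Delta v=(y\cdot\nu_\infty+c)_+^{\alpha}\, v^{p-1}$ on $\R^N$ in the boundary regime. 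The classical Gidas--Spruck Liouville theorem (applicable since $2<p<2^*$) rules out the first case, while the weighted Liouville theorem of Du--Li~\cite{du.li:05} rules out the second; in both cases $v(0)=1$ is incompatible with $v\equiv 0$, giving the contradiction.

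The main obstacle is the boundary regime. One must flatten $\partial\Omega$ in the rescaled picture using only the $C^1$-regularity of $\partial\Omega$, establish global (not merely half-space) boundedness of the limit $v$ by harmonically continuing across $\partial H$ (exploiting that $v_n$ satisfies the homogeneous Helmholtz equation off $\Omega_n$ with $\mu_n^2\lambda_n\to 0$), and verify that the limit equation falls within the scope of~\cite{du.li:05}. It is precisely at this point that the specific boundary vanishing rate prescribed in~\eqref{eqn:Q_dist} is essential: it is what produces a nontrivial power-type weight $(y\cdot\nu_\infty+c)_+^{\alpha}$ in the rescaled limit, rather than a trivial or divergent one, and thereby opens the door to the Liouville theorem.
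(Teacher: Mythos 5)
Your overall strategy coincides with the paper's: argue by contradiction, blow up near a maximum point, separate into an interior regime and a boundary regime, pass to a limit profile, and rule it out via the Gidas--Spruck Liouville theorem in the interior case and the Du--Li weighted half-space Liouville theorem in the boundary case. Your bookkeeping is slightly different (you absorb $\beta(x_n)$ into the scaling constant so that the interior limit weight is $1$ rather than $Q(x^*)$, and your single ``interior regime'' $D_n\to\infty$ unifies the paper's Cases~1 and~3), but this is cosmetic. Your simpler argument that $t_n \le c_0^{-1}\|u_n\|_\infty$ via $u_n \ge t_n\,\mK_{\lambda_n}(1)$ is a pleasant alternative to re-invoking the Krein--Rutman eigenfunction estimate; it is enough since only $t_n M_n^{1-p}\to 0$ is needed.

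There is, however, a genuine gap in the step where you extract a locally convergent subsequence from $(v_n)$. You claim that Lemma~\ref{lem:reg2}, applied after rescaling, yields uniform bounds on $v_n$ over compact subsets of $\R^N$. This does not work for two reasons. First, after rescaling the inhomogeneity is supported on $\Omega_n$, whose diameter is $r_Q/c_n\to\infty$, so the hypothesis $\mathrm{diam}(\mathrm{supp}\,f)\le r_0$ of Lemma~\ref{lem:reg2} fails. Second, Lemma~\ref{lem:reg2} measures the right-hand side in $L^{p'}$, but at this stage of the argument one has not established any uniform $L^{p'}$-bound on $Q u_n^{p-1}$; the eigenfunction argument only gives the $L^1$-bound $\int_\Omega Q u_n^{p-1}\,dx \le \kappa_1$. (Indeed, obtaining the stronger integrability bound is precisely what requires $p < \tfrac{2^*}{2}+1$ in Proposition~\ref{prop:a_priori_w_restr} and is therefore unavailable here.) What is actually needed is a relative estimate of the form $0 \le u_n \le \kappa_\eps M_n$ on an $\eps$-neighborhood $\Omega_\eps$ of $\overline\Omega$, which the paper proves by a maximum-principle argument: the comparison function $\tilde u_n := \Psi_0 \ast (Q u_n^{p-1}+t_n Q)$ is harmonic off $\overline\Omega$ and lies in $L^{2^*}(\R^N)$, hence attains its supremum on $\overline\Omega$; combined with the two-sided kernel comparison $\delta_\eps\Psi_0 \le \Psi_{\lambda_n} \le \gamma\Psi_0$ on $B_{r_Q+\eps}(0)$ from Lemma~\ref{lem:bessel}, one obtains $u_n \le \gamma\tilde u_n \le \gamma\max_{\overline\Omega}\tilde u_n \le (\gamma/\delta_\eps) M_n$ on $\Omega_\eps$, i.e.\ $v_n \le \gamma/\delta_\eps$ on $\Omega_{n,\eps}$, and $\Omega_{n,\eps}$ exhausts $\R^N$ in all regimes. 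You gesture at the relevant structural fact (``$v_n$ satisfies the homogeneous Helmholtz equation off $\Omega_n$''), but you defer it to a post-hoc role after the limit, whereas it must be used \emph{before} the limit to furnish the locally uniform bounds that make the compactness argument work at all.

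Once that bound is in place, the rest of your passage to the limit, convergence of the rescaled weight, and application of the Liouville theorems match the paper's argument.
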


\begin{proof}
We start by remarking that the first part of the proof of Proposition~\ref{prop:a_priori_w_restr}
works for all $2<p<2^\ast$. Hence, there is a constant $\kappa_2>0$ such that \eqref{eqn:t}
holds for all $t\geq 0$ which solve \eqref{eqn:fp_w_modif} for some $ u\geq 0$.

Suppose by contradiction that there exists some sequence $((\lambda_n,t_n,u_n))_n$ in 
$[-\Lambda_*,{\Lambda^*}]\times [0,\kappa_2] \times C(\oO)$ such that $u_n\geq 0$ in $\Omega$,
$$
u_n=\mK_{\lambda_n}(u_n^{p-1}+t_n) \qquad \text{in $C(\oO)$ for all $n \in \N$}
$$
and  
$$
M_n:= \|u_n\|_{L^\infty(\overline \Omega)}\to\infty \qquad \text{as $n\to\infty$.}
$$
For $n \in \N$, we extend $u_n$ canonically to all of $\R^N$ by \eqref{eq:extension-formula} 
with $\lambda= \lambda_n$ and $f= u_n^{p-1}+t_n$. Then Lemma~\ref{lem:reg1bis}(a) implies  
that $u_n \in L^{2^\ast}(\R^N)\cap W^{2,(2^\ast)'}_{\text{loc}}(\R^N)$ is a strong solution of 
$$
-\Delta u_n -\lambda_n u_n  = Qu_n^{p-1}+t_n Q \qquad \text{in $\R^N$.}
$$ 
Next, we note that by the assumption $\Lambda^* < \lambda_Q$ we may fix $\eps>0$ such that  
\begin{equation*}
\sqrt{{\Lambda^*}}({r_Q}+\eps)<y_{\frac{N-1}2}^{(1)}.
\end{equation*} 
We then put 
$$
\Omega_\eps:=\{x\in\R^N\, :\, \text{dist}(x,\Omega)<\eps\}, 
$$
and we claim that there exists a constant $\kappa_\eps>0$ such that 
\begin{equation}
  \label{eq:rel-kappa-est}
0 \le  u_n \leq \kappa_\eps M_n \qquad \text{in $\Omega_\eps\quad$ for all $n \in \N$.}
\end{equation}
Indeed, we recall that, by Lemma~\ref{lem:bessel} (ii) -- (iv), there exist   
constants $\delta_\eps>0$ and $\gamma>0$ such that
$$
\delta_\eps\Psi_0(z)\leq \Psi_{\lambda_n}(z)\leq \gamma \Psi_0(z)\quad\text{ for all $z \in \R^N$ with $|z|\leq {r_Q}+\eps$ and $n \in \N$.}
$$
Consequently, putting 
$$
\tilde u_n := \Psi_0 * (Q u_n^{p-1} + t_n Q) \;\in \; L^{2^*}(\R^N)\qquad \text{for $n \in \N$,}
$$
we deduce that 
\begin{equation}\label{eqn:bds_u_u0}
\delta_\eps \tilde u_n(x) \leq u_n(x)\leq \gamma \tilde u_n(x)\quad\text{ for all }x\in\Omega_\eps.
\end{equation}
Moreover, we have 
$$
-\Delta \tilde u_n=Q u_n^{p-1}+t_nQ=0\quad\text{ in }\R^N\backslash\oO,
$$
and thus the maximum principle for harmonic functions shows that $\sup\limits_{\R^N}\tilde u_n = \max\limits_{\oO} \tilde u_n$. The bounds \eqref{eqn:bds_u_u0} therefore yield \eqref{eq:rel-kappa-est} with $\kappa_\eps= \frac{\gamma}{\delta_\eps}$, as claimed.

Next, let $x_n\in\oO$ be such that $u_n(x_n)=M_n$. 
Since $\oO$ is compact, we may assume, passing to a subsequence, that 
$$
x_n\to x^\ast \qquad \text{as $n\to\infty\quad$ for some $x^\ast\in \oO$.}
$$
Inspired by \cite{gidas.spruck81,amann-lopez98},  we now perform a rescaling of $u_n$.  For this we pass to a subsequence such that one of the following cases occurs.\\
{\bf Case 1:} $x^* \in \Omega$.\\
{\bf Case 2:} $x^* \in \partial \Omega$, and 
$M_n^{\frac{p-2}{\alpha+2}}\dist(x_n,\R^N \! \setminus \!\Omega) \to c \in [0,\infty)$.\\
{\bf Case 3:} $x^* \in \partial \Omega$, and 
$M_n^{\frac{p-2}{\alpha+2}}\dist(x_n, \R^N \! \setminus \! \Omega)  \to \infty$ as $n \to \infty$.\\
We then consider the rescaled functions  
$$
v_n \in L^{2^\ast}(\R^N)\cap W^{2,(2^\ast)'}_{\text{loc}}(\R^N),\qquad  v_n(y)=M_n^{-1}u_n(x_n+c_n y)
$$
with 
$$
c_n= \left \{
  \begin{aligned}
  &M_n^{\frac{2-p}{2}} &&\qquad \text{in Case 1;}\\
  &M_n^{\frac{2-p}{\alpha+2}} &&\qquad \text{in Case 2;}\\
  &M_n^{\frac{2-p}{2}} \dist(x_n, \R^N \! \setminus \! \Omega)^{-\frac{\alpha}{2}} &&\qquad \text{in Case 3.} 
  \end{aligned}
\right.
$$ 
We note that in all cases we have $\lim \limits_{n \to \infty} c_n = 0.$ Moreover, the functions $v_n$ are strong solutions of the equations 
\begin{equation}
\label{eq:v-n-rescaled}
-\Delta v_n - \lambda_n c_n^2 v_n =  Q_n v_n^{p-1} + 
t_n M_n^{1-p} Q_n\qquad \text{in $\R^N$}
\end{equation}
with 
\begin{equation}
  \label{eq:definition-Q-n}
Q_n \in L^\infty_c(\R^N), \qquad Q_n(y)=M_n^{p-2} c_n^2 Q(x_n + c_n y).
\end{equation}
We also put
$$
\Omega_n:= \{y \in \R^N\::\: x_n + c_n y \in \Omega\}
$$
and 
$$
\Omega_{n,\eps}:= \{y \in \R^N\::\: x_n + c_n  y \in \Omega_\eps\}. 
$$
We note that, by \eqref{eq:rel-kappa-est} we have 
\begin{equation}
  \label{eq:local-bound-v-n}
0 \le v_n \le \kappa_\eps \qquad \text{in $\Omega_{n,\eps}$}\qquad \text{and}\qquad v_n(0)=1.
\end{equation}
By construction and since $x^*$ is contained in the interior of $\Omega_\eps$, we see that the domains $\Omega_{n,\eps}$ converge to $\R^N$ in the sense that for every $R>0$ there exists $n_R$ such that $B_R(0) \subset \Omega_{n,\eps}$ for $n \ge n_R$. Hence we infer from \eqref{eq:local-bound-v-n} that the sequence $(v_n)_n$ is locally uniformly bounded on $\R^N$. We also note that 
\begin{equation}
 \label{eq:rescaled-boundary-distance}
\dist(x_n+c_n y, \R^N \! \setminus \! \Omega) = c_n \dist(y, \R^N \! \setminus \!\Omega_n)  \qquad \text{for every $y \in \R^N$}
 \end{equation}
and therefore, by \eqref{eqn:Q_dist} and \eqref{eq:definition-Q-n},  
\begin{equation}
  \label{eq:definition-Q-n-1}
Q_n(y)= \beta (x_n + c_n y)    M_n^{p-2} c_n^{2+\alpha}  \dist(y,\R^N \! \setminus \!\Omega_n)^\alpha \qquad \text{for $y \in \Omega_n$.}
\end{equation}
In the following we distinguish  Cases 1-3 above.\\
{\bf Case 1:} In this case we have
$$
Q_n(y)= Q(x_n+c_n y)\qquad \text{for $y \in \R^N$} 
$$
 by \eqref{eq:definition-Q-n} and the definition of $c_n$, $n \in \N$. Since $x_n \to x^* \in \Omega$ and $c_n \to 0$ as $n \to \infty$, we deduce that 
$$
Q_n \to Q(x^*)>0 \qquad \text{locally uniformly on $\R^N$ as $n \to \infty$.}
$$
Using interior $W^{2,q}$-estimates and the fact that 
$(\lambda_n)_n$ and $(t_n)_n$ are bounded sequences whereas $M_n \to \infty$ and $c_n \to 0$ as $n \to \infty$, we deduce from \eqref{eq:v-n-rescaled} that $v_n$ converges (up to a subsequence) 
locally uniformly to a bounded nonnegative strong solution $v$ of
$$
-\Delta v = Q(x^\ast)v^{p-1}\quad\text{ in } \R^N.
$$
As a consequence of Schauder estimates, $v$ is a classical solution of the above equation and,
since $Q(x^\ast)>0$, Theorem 1.2 in \cite{gidas.spruck81} implies that $v=0$, in contradiction to the fact that 
$v_n(0)=1$ for all $n$. Hence Case 1 does not occur.\\
{\bf Case 2:} Since $\Omega$ is of class $C^1$ and $x^* \in \partial \Omega$, there exists an affine half space $H$ such that $\partial  H$ is tangent to $\partial \Omega$ at $x^*$ and  
\begin{equation}
  \label{eq:dist-formula-1}
\dist(y, \R^N \! \setminus \!\Omega) = \dist(y, H)+o(|y-x^*|) \qquad \text{as $y \to x^*$.} 
\end{equation}
Consider the rescaled half space $H_n:= \{y \in \R^N\::\: x_n + c_n y \in H\}$. Then $\partial H_n$ is tangent to $\Omega_n$ at $z_n:= \frac{x^* - x_n}{c_n}$.  Since $c_n \to 0$ as $n \to \infty$, it follows from \eqref{eq:rescaled-boundary-distance} and \eqref{eq:dist-formula-1} that  
\begin{equation}
  \label{eq:dist-formula-2}
\dist(y, \R^N \! \setminus \!\Omega_n) = \dist(y, H_n) + o(1)  \qquad \text{locally uniformly in $y \in \R^N$ as $n \to \infty.$} 
\end{equation}
Furthermore, by definition of $c_n$ in Case 2, we have that $\dist(0, \R^N \! \setminus \! \Omega_n)= \frac{\dist(x_n,\R^N \! \setminus \!\Omega)}{c_n} \to c$ and thus also $\dist(0, H_n) \to c$ as $n \to \infty$ by \eqref{eq:dist-formula-2}. Passing to a subsequence, we may therefore assume that there exists an affine limit half space $H \subset \R^N$ such that 
$$
\lim_{n \to \infty} \dist(y, H_n) = \dist(y,H) \qquad  \text{locally uniformly in $y \in \R^N$ as $n \to \infty.$} 
$$
Combining this with \eqref{eq:dist-formula-2}, we find that 
$$
\lim_{n \to \infty} \dist(y,\R^N  \setminus \Omega_n)= \dist(y,H) 
\qquad  \text{locally uniformly in $y \in \R^N$ as $n \to \infty.$} 
$$
Consequently, by \eqref{eq:definition-Q-n-1} and the definition of $c_n$ in Case 2,  
$$
Q_n(y)= \beta (x_n + c_n y)  \dist(y,\R^N \! \setminus \! \Omega_n)^\alpha \to \beta(x^*) \dist(y,H)^\alpha
\qquad  \text{locally uniformly in $y \in \R^N$} 
$$
 as $n \to \infty.$ As in Case 1, we then find that $v_n$ converges (up to a subsequence) 
locally uniformly to a bounded nonnegative strong solution $v$ of
$$
-\Delta v = \beta(x^*) \dist(y,H)^\alpha v^{p-1},\qquad y \in \R^N.
$$
By \cite[Theorem B]{du.li:05} (see also \cite[Theorem 1.1]{polacik.quittner:05}), we then conclude that $v \equiv 0$, in contradiction to the fact that 
$v_n(0)=1$ for all $n$. Hence Case 2 does not occur.\\
{\bf Case 3:} In this case we find that  
$$
\dist(0,\R^N \! \setminus \! \Omega_n) =\frac{\dist(x_n,\R^N \! \setminus \!\Omega)}{c_n} = \Bigl(M_n^{\frac{p-2}{\alpha+2}} \dist(x_n,\R^N \! \setminus \!\Omega) \Bigr)^{\frac{2+\alpha}{2}} \to \infty,
$$
which implies that 
$$
 \dist(y,\R^N \! \setminus \!\Omega_n) = \dist(0,\R^N \! \setminus \!\Omega_n)\bigl(1 + o(1)\bigr) \qquad \text{locally uniformly in $y \in \R^N$ as $n \to \infty$.}
$$
Since, by definition of $c_n$, we also have 
$$
\dist(0,\R^N \! \setminus \!\Omega_n)= \frac{\dist(x_n,\R^N \! \setminus \!\Omega)}{c_n}= M_n^{\frac{2-p}{\alpha}} c_n^{-\frac{2+\alpha}{\alpha}}, 
$$
it thus follows that 
$$
\dist(y,\R^N \! \setminus \!\Omega_n)^\alpha = M_n^{2-p} c_n^{-2-\alpha}\bigl(1 + o(1)\bigr)\qquad \text{locally uniformly in $y \in \R^N$ as $n \to \infty$.}
$$
We thus find that 
$$
Q_n(y)= \beta (x_n + c_n y)    M_n^{p-2} c_n^{2+\alpha}  \dist(y,\R^N \! \setminus \!\Omega_n)^\alpha  \to \beta(x^*) \qquad \text{locally uniformly in $y \in \R^N$}
$$
as $n \to \infty$. Since $\beta(x^*)>0$, we arrive at a contradiction as in Case 1.\\
Since the above are the only possible cases, the proposition is proved.
\end{proof}

\section{A global branch of positive solutions}
\label{sec:glob-branch-posit}  
As before we assume that $Q\in L^\infty_c(\R^N)$ nonnegative, $Q\not\equiv 0$, and we let $\Omega \subset \R^N$ and ${r_Q}>0$, $\lambda_Q>0$ be given as in the previous sections.

In this section, we shall prove the existence of a connected component of positive solutions
of the problem \eqref{eq:K-equation}, where $\mK_\lambda$ is defined in \eqref{eq:def-K-lambda}. This will result from an application of the Leray-Schauder
continuation principle (see e.g.\cite{zeidler}). To set up the corresponding framework, we consider, for $p>2$, the nonlinear operator 
\begin{equation}
  \label{eq:F-operator-def}
F: \R\times C(\oO) \to C(\oO),\qquad F(\lambda,u):=\mK_\lambda (u_+^{p-1}),
\end{equation}
where $u_+(x):=\max\{u(x),0\}$ denotes the positive part of $u$ and $\mK_\lambda$ is defined in
\eqref{eq:def-K-lambda}. 

\begin{lemma}
\label{sec:glob-branch-posit-1}
The map $F$: $\R\times C(\oO) \to C(\oO)$ is compact and continuous. 
\end{lemma}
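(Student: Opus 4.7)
The plan is to factor $F$ as $F = \mathcal{K} \circ \Phi$, where
\begin{equation*}
\Phi: \R \times C(\oO) \to \R \times C(\oO),\qquad \Phi(\lambda,u) := (\lambda, u_+^{p-1}),
\end{equation*}
is continuous and maps bounded sets to bounded sets (since $t \mapsto t_+^{p-1}$ is locally Lipschitz on $\R$, and hence the induced Nemytskii map from $C(\oO)$ to itself is too), while $\mathcal{K}(\lambda,f) := \mK_\lambda(f)$. It therefore suffices to establish that $\mathcal{K}: \R \times C(\oO) \to C(\oO)$ is continuous and sends bounded sets to precompact sets.

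For compactness, I would fix a bounded set $B \subset [-\Lambda_*, \Lambda^*] \times \{f \in C(\oO) : \|f\|_\infty \le M\}$ and choose $R>0$ with $\oO \subset B_R(0)$; the density $Qf$ is then uniformly bounded in $L^\infty_c(\R^N)$ with support in $\oO$. Running exactly the bootstrap used in the proof of Lemma~\ref{comp-strictly-positive}(a)---iterating the interior estimate of Lemma~\ref{lem:reg1bis}(b), whose constant depends only on the $\lambda$-range $[-\Lambda_*, \Lambda^*]$ and not on $\lambda$ itself---one obtains a constant $C = C(N,p,\Lambda_*,\Lambda^*,R,\|Q\|_\infty,M)$ such that
\begin{equation*}
\|\mR_\lambda(Qf)\|_{W^{2,N}(B_R(0))} \le C \qquad \text{for every } (\lambda,f) \in B.
\end{equation*}
The compact embedding $W^{2,N}(B_R(0)) \hookrightarrow C(\overline{B_R(0)})$ (Rellich--Kondrachov) then places $\mathcal{K}(B)$ in a compact subset of $C(\oO)$.

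For the continuity of $\mathcal{K}$ at a point $(\lambda_0, f_0)$, I would pick $(\lambda_n, f_n) \to (\lambda_0, f_0)$ and split
\begin{equation*}
\mK_{\lambda_n}(f_n) - \mK_{\lambda_0}(f_0) = \mK_{\lambda_n}(f_n - f_0) + \bigl[\mK_{\lambda_n}(f_0) - \mK_{\lambda_0}(f_0)\bigr].
\end{equation*}
The first summand tends to $0$ in $C(\oO)$ thanks to the uniform operator bound $\sup_n \|\mK_{\lambda_n}\|_{C \to C} < \infty$ furnished by the compactness step. For the second summand I would combine the pointwise convergence $\Psi_{\lambda_n}(x-y) \to \Psi_{\lambda_0}(x-y)$ at every $x \ne y$ from Lemma~\ref{lem:bessel}(i) with the $\lambda$-uniform majorant
\begin{equation*}
|\Psi_\lambda(x-y)| \le \gamma_N \Psi_0(x-y) + \zeta_N \max\{\Lambda^*,0\}^{(N-3)/4} |x-y|^{(1-N)/2}
\end{equation*}
provided by Lemma~\ref{lem:bessel}(ii),(v), which is integrable over $\oO$ uniformly in $x \in \oO$. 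Dominated convergence then yields pointwise convergence $[\mK_{\lambda_n}(f_0)](x) \to [\mK_{\lambda_0}(f_0)](x)$ on $\oO$; equicontinuity of the family $\{\mK_{\lambda_n}(f_0)\}_n$---a consequence of the uniform $W^{2,N}(B_R(0))$ bound together with the Morrey embedding into $C^{0,\gamma}$---upgrades this to uniform convergence on the compact set $\oO$.

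The main technical subtlety is bookkeeping: every constant used (in the bootstrap of Lemma~\ref{lem:reg1bis}(b) and in the majorant from Lemma~\ref{lem:bessel}(ii),(v)) must be shown to depend only on the interval $[-\Lambda_*, \Lambda^*]$, not on the particular $\lambda$. This uniformity is already explicit in the statements of those lemmas, so once it is recognised both ingredients---the $W^{2,N}$-bootstrap for compactness and the dominated-convergence argument for continuity---go through without further surprises.
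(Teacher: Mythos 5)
Your proposal is correct and rests on the same ingredients as the paper: the Bessel estimates of Lemma~\ref{lem:bessel} together with dominated convergence for the $\lambda$-dependence, elliptic $W^{2,N}$-bounds plus Rellich--Kondrachov for compactness, and local Lipschitz continuity of $u \mapsto u_+^{p-1}$. The organization differs slightly. The paper proves the map $\lambda \mapsto \mK_\lambda$ is continuous in \emph{operator norm} directly: since $x-y \in B_{r_Q}(0)$ whenever $x,y \in \oO$, it bounds $\|\mK_\lambda(u) - \mK_\mu(u)\|_\infty \le \|Q\|_\infty \|u\|_\infty \int_{B_{r_Q}(0)}|\Psi_\lambda - \Psi_\mu|\,dz$ -- a quantity independent of both $u$ (up to the factor $\|u\|_\infty$) and $x$ -- and then sends the integral to $0$ by dominated convergence; compactness of $F$ then follows by combining this with compactness of each individual $\mK_\lambda$ from Lemma~\ref{comp-strictly-positive}(a). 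Your compactness argument (uniform $W^{2,N}$ bound on the image of a bounded $(\lambda,f)$-set) is equivalent in substance. Your continuity argument, however, is somewhat roundabout: the same dominated-convergence bound you invoke for pointwise convergence of $[\mK_{\lambda_n}f_0](x)$ is in fact already uniform in $x$ after the change of variables $z = x - y$, so the detour through equicontinuity and Morrey embedding is unnecessary -- you could simply estimate $\|\mK_{\lambda_n}(f_0) - \mK_{\lambda_0}(f_0)\|_\infty \le \|Q\|_\infty \|f_0\|_\infty \|\Psi_{\lambda_n} - \Psi_{\lambda_0}\|_{L^1(B_{r_Q}(0))} \to 0$, which is the paper's one-line version.
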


\begin{proof}
We first recall that the linear operator $\mK_\lambda: C(\oO) \to C(\oO)$ is compact for every $\lambda \in \R$ by Lemma~\ref{comp-strictly-positive}(a). We also claim that the operator-valued mapping 
\begin{equation}
  \label{eq:lambda-K-map}
\R \to \cL(C(\oO)),\qquad  \lambda \mapsto \mK_\lambda 
\end{equation}
is continuous. Indeed, for  $u\in C(\oO)$ with $\|u\|_\infty\leq 1$,  $\lambda\in\R$ and $\mu\in[\lambda-1,\lambda+1]$ we have
\begin{align*}
\|\mK_\lambda(u)&-\mK_\mu(u) \|_\infty
\leq  \sup_{x\in\oO}\int_{\oO}|\Psi_\lambda(x-y)-\Psi_\mu(x-y)|  |Q(y)| |u(y)| \, dy\\
&\le \|Q\|_\infty \sup_{x\in\oO} \int_{\oO}|\Psi_\lambda(x-y)-\Psi_\mu(x-y)| \, dy  \leq \|Q\|_\infty \int_{B_{{r_Q}}(0)}|\Psi_\lambda(z)-\Psi_\mu(z)|\, dz.
\end{align*}
Now, by Lemma~\ref{lem:bessel} (ii) and (v), 
$$
|\Psi_\lambda|, |\Psi_\mu|\leq 
\gamma_N\Psi_0+\zeta_N \left(|\lambda|+1\right)^{\frac{N-3}{4}}|\cdot |^{\frac{1-N}{2}} \in L^1(B_{{r_Q}}(0))
$$ 
and by Lemma~\ref{lem:bessel} (i), 
$$
\Psi_\mu(z) \to \Psi_\lambda(z)\qquad \text{as $\mu\to\lambda$ for all $z\neq 0$.}
$$
Hence, the dominated convergence theorem yields
$$
\left\|\mK_\lambda(u)-\mK_\mu(u)\right\|_\infty\to 0, \quad\text{ as $\mu\to\lambda$ uniformly for $\|u\|_\infty\leq 1$.}
$$
This yields the continuity of the map in \eqref{eq:lambda-K-map}. Finally, we recall that the map $u\mapsto u_+^{p-1}$ is Lipschitz continuous on bounded subsets of $C(\oO)$. From these properties, the claim follows. 
 \end{proof}

\begin{theorem}\label{thm:continuation}
Assume that one of the assumptions $(A1)$ or $(A2)$ from the introduction is satisfied. 
Then, for any $\Lambda^* \in (0,\lambda_Q)$,  there exists a global branch of positive solutions of \eqref{eq:K-equation} in 
$(-\infty, \Lambda^*] \times C(\oO)$.
\end{theorem}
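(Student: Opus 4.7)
The plan is to apply a Leray--Schauder continuation argument to the fixed-point equation $u = F(\lambda,u) = \mK_\lambda(u_+^{p-1})$ in $C(\oO)$, combining the a priori upper bounds from Proposition~\ref{prop:a_priori_w_restr} (under (A1)) or Proposition~\ref{prop:a_priori_w_all_p} (under (A2)) with the lower bound from Lemma~\ref{sec:infinity-norm-lower-bounds}. Fix $\Lambda^*\in(0,\lambda_Q)$; for any $\Lambda_*>0$ the relevant proposition furnishes a constant $M=M(\Lambda_*,\Lambda^*)$ with $\|u\|_{L^\infty(\oO)}+t\leq M$ for every nonnegative solution $(t,u)$ of the augmented equation \eqref{eqn:fp_w_modif} with $\lambda\in[-\Lambda_*,\Lambda^*]$. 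Choose $R,T>M$, and, for each such $\lambda$, consider the compact continuous homotopy $H_s(u):=\mK_\lambda(u_+^{p-1}+sT)$, $s\in[0,1]$, in $C(\oO)$. Every fixed point of $H_s$ is nonnegative (by Lemma~\ref{lem:positive}, using $\Lambda^*<\lambda_Q$) and satisfies $\|u\|_\infty+sT\leq M$, so no fixed point lies on $\partial B_R(0)$ for any $s\in[0,1]$, while at $s=1$ there are no fixed points at all. Homotopy invariance then yields $\deg(I-F(\lambda,\cdot),B_R(0),0)=0$.

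I would next isolate the trivial solution. Because $p>2$, one has $\|F(\lambda,u)\|_\infty\leq\|\mK_\lambda\|\,\|u\|_\infty^{p-1}=o(\|u\|_\infty)$ as $u\to 0$ in $C(\oO)$, which makes $0$ an isolated fixed point of $F(\lambda,\cdot)$ of index $1$. Lemma~\ref{sec:infinity-norm-lower-bounds} (applied with $\tau=1$) supplies a threshold $\delta_0=\delta_0(\Lambda^*)>0$ such that no nontrivial solution of \eqref{eq:K-equation} with $\lambda\leq\Lambda^*$ has $L^\infty$-norm at most $\delta_0$. Picking $\delta\in(0,\delta_0)$ and applying the excision property of the Leray--Schauder degree gives
$$
\deg\bigl(I-F(\lambda,\cdot),\, B_R(0)\setminus\overline{B_\delta(0)},\,0\bigr)\;=\;0-1\;=\;-1 \qquad \text{for every } \lambda\in[-\Lambda_*,\Lambda^*].
$$
Any fixed point obtained in the annulus is nontrivial, and by $u_+\not\equiv 0$ combined with Lemma~\ref{comp-strictly-positive}(b) it is strictly positive on $\oO$, so $u_+=u$ and $u$ solves \eqref{eq:K-equation}. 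Standard Leray--Schauder continuation (the non-vanishing degree on the annular region persists jointly in $\lambda$, projections of connected sets being connected) then yields a connected set $\cC_{\Lambda_*}$ of positive solutions in $[-\Lambda_*,\Lambda^*]\times(\overline{B_R(0)}\setminus B_\delta(0))$ meeting every slice $\{\lambda\}\times C(\oO)$ with $\lambda\in[-\Lambda_*,\Lambda^*]$.

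To reach the unbounded interval $(-\infty,\Lambda^*]$, I would let $\Lambda_*\to\infty$ along an increasing sequence, producing a family of slab-branches $\cC_{\Lambda_*}$ all meeting the relatively compact slice at $\lambda=\Lambda^*$, and then invoke the general topological result recorded in the appendix to assemble a connected component of the full positive-solution set projecting onto $(-\infty,\Lambda^*]$. The main obstacle is not the degree computation itself, which is a clean homotopy argument once the uniform upper and lower bounds are simultaneously available, but precisely this last step: since $M(\Lambda_*,\Lambda^*)$ may blow up as $\Lambda_*\to\infty$, the slab-branches are not obviously nested, and a genuine topological argument---rather than a direct Leray--Schauder continuation on a single slab---is needed to extract one unbounded branch. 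The abstract lemma in the appendix is tailored to exactly this situation.
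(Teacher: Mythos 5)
Your proposal is correct and follows essentially the same strategy as the paper's proof: Leray--Schauder continuation applied to $F$ on an annulus in $C(\overline\Omega)$, with the augmented problem~\eqref{eqn:fp_w_modif} and the a priori bounds of Propositions~\ref{prop:a_priori_w_restr}/\ref{prop:a_priori_w_all_p} forcing index $0$ on the large ball, the lower bound of Lemma~\ref{sec:infinity-norm-lower-bounds} isolating the trivial solution with index $1$, hence index $-1$ on the annulus, and then the Whyburn-type Proposition~\ref{sec:appendix-1} assembling the slab-branches into one unbounded branch as $\Lambda_*\to\infty$. The only inessential deviation is that you obtain the index~$1$ at the origin from the superlinear estimate $\|F(\lambda,u)\|_\infty=O\bigl(\|u\|_\infty^{p-1}\bigr)$ as $u\to 0$, whereas the paper derives it from Lemma~\ref{sec:infinity-norm-lower-bounds} (via the clause $F(\lambda,u)\neq\tau u$ for $\tau\ge1$ on a small sphere and Zeidler's Theorem 13.A); both arguments are valid and interchangeable here.
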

\begin{proof}
In a first step, we fix $\Lambda_* >0$, and we 
show the existence of a global branch of positive solutions of \eqref{eq:K-equation} in 
$[-\Lambda_*,\Lambda^*] \times C(\oO)$. For this, we apply the Leray-Schauder continuation principle (see, e.g., \cite[Theorem 14.C]{zeidler}) to the operator
$F$ defined in \eqref{eq:F-operator-def} and the open annulus 
$$
\cA_{\delta,\gamma}:=\{u\in C(\oO)\, :\, \delta<\|u\|_\infty<\gamma\} \quad \subset \quad C(\oO),
$$
where $0<\delta<\gamma$ are suitably chosen.
We first need to ensure that, for all $\lambda\in[-\Lambda_*,{\Lambda^*}]$, the equation $u=F(\lambda,u)$ does
not have any solution on the boundary of $\cA_{\delta,\gamma}$. We start by remarking that every solution 
$u\in C(\oO)$ of the equation $u=F(\lambda,u)$ with $\lambda\leq{\Lambda^*}$ satisfies $u=u_+\geq 0$ in $\oO$ by Lemma~\ref{comp-strictly-positive}(b) since $\Lambda^\ast<\lambda_Q$. Letting $\delta_0>0$ be given by Lemma~\ref{sec:infinity-norm-lower-bounds}, we then see that either $u\equiv 0$
or the estimate
\begin{equation}\label{eqn:delta_0}
\|u\|_\infty\geq \delta_0
\end{equation}
holds. Indeed, this follows from an application of Lemma~\ref{sec:infinity-norm-lower-bounds} to the canonical extension of $u$ on $\R^N$ given by \eqref{eq:extension-formula} with $f = u^{p-1}$ (see the remarks before Lemma~\ref{comp-strictly-positive}).

Next, we set $u_{0,\lambda}= \mR_{\lambda}(Q)$ and 
use Propositions~\ref{prop:a_priori_w_restr} and \ref{prop:a_priori_w_all_p} to obtain $C>0$ such that
\begin{equation}\label{eqn:a_priori}
\|u\|_\infty+t \leq C \qquad \text{for all solutions of $u=F(\lambda,u)+t u_{0,\lambda}$ with $\lambda\in[-\Lambda_*,{\Lambda^*}]$.}
\end{equation}
Choosing $0<\delta<\delta_0$ and $\gamma> \max\{C,\delta\}$, we infer, in particular, that all solutions 
$u\in \overline{\cA_{\delta,\gamma}}$ of the equation $u=F(\lambda,u)$ with $\lambda\in[-\Lambda_*,{\Lambda^*}]$ are contained in the open set $\cA_{\delta,\gamma}$.

We now claim that the Leray-Schauder fixed-point index 
$\text{ind}(F(-\Lambda_*,\cdot),\cA_{\delta,\gamma})$ is different from zero (see e.g. \cite{zeidler} for a definition of this index).
To prove this,  we note that
$$
F(-\Lambda_*,u)\neq \tau u\text{ for all }\tau\geq 1\text{ and all }u\in C(\oO)\text{ with }\|u\|_\infty=\delta
$$
by Lemma~\ref{sec:infinity-norm-lower-bounds}. Hence, by  \cite[Theorem 13.A]{zeidler}), we find that  
$$
\text{ind}(F(-\Lambda_*,\cdot),B_\delta(0))=1. 
$$
Here and in the following, we put $B_\rho(0):= \{u \in C(\oO)\::\: \|u\|_\infty < \rho\}$ for $\rho>0$.
Considering the compact homotopy 
$$
H: [0,T]\times C(\oO) \to C(\oO),\qquad H(t,u)=F(-\Lambda_*,u)+t u_{0,-\Lambda_*},
$$
we also find that 
$$
u\neq H(t,u)\text{ for all }t\geq 0\text{ and all }u\in C(\oO)\text{ with }\|u\|_\infty=\gamma,
$$
since $\gamma>C$ and \eqref{eqn:a_priori} holds. In addition, choosing $T>C$, we have $u\neq H(T,u)$ for all $u\in C(\oO)$ again by \eqref{eqn:a_priori}. By the existence principle and the homotopy invariance of the fixed-point index,
we thus obtain 
$$
\text{ind}(F(-\Lambda_*,\cdot),B_\gamma(0))=\text{ind}(H(T,\cdot),B_\gamma(0))=0.
$$
Using the additivity of the fixed-point index, we conclude that 
$$
\text{ind}(F(-\Lambda_*,\cdot),\cA_{\delta,\gamma})=
\text{ind}(F(-\Lambda_*,\cdot),B_\gamma(0))-\text{ind}(F(-\Lambda_*,\cdot),B_\delta(0))=-1.
$$
Hence, all the assumptions of the Leray-Schauder continuation principle given in \cite[Theorem 14.C]{zeidler} are satisfied, and this principle yields the existence of a connected component of the set of nonnegative solutions $(\lambda,u)$ of \eqref{eq:K-equation} in $[-\Lambda_*,{\Lambda^*}]\times\cA_{\delta,\gamma}$ which intersects $\{-\Lambda_*\} \times \cA_{\delta,\gamma}$ and $\{{\Lambda^*}\} \times \cA_{\delta,\gamma}$. By Lemma~\ref{comp-strictly-positive}(b), it is a global branch of positive solutions of \eqref{eq:K-equation} in $[-\Lambda_*,{\Lambda^*}] \times C(\oO)$. This concludes the first step of the proof.\\
In a second step, we consider a strictly increasing sequences of numbers 
$\Lambda_{*,n}>0$ such that $\Lambda_{*,n} \to \infty$ as $n \to \infty$. As a consequence of what we have already proved, for every $n \in \N$ there exists a global branch $\cC_n$ of positive solutions of \eqref{eq:K-equation} in $[-\Lambda_{*,n},{\Lambda^{*}}] \times C(\oO)$. We now consider the subset $X$ of all points $(\lambda,u) \in (-\infty,\Lambda^*] \times C(\oO)$ such that 
$u$ is a positive solution of \eqref{eq:K-equation}. From Lemma~\ref{sec:infinity-norm-lower-bounds}, Lemma~\ref{comp-strictly-positive}, Propositions~\ref{prop:a_priori_w_restr} and \ref{prop:a_priori_w_all_p}, it then follows that 
\begin{equation}
\label{eq:-local-compact}
\text{$X \cap \bigl([r,\Lambda^*] \times C(\oO)\bigr)$ is compact for every $r\leq\Lambda^*$.}  
\end{equation}
In particular, $X$ is a locally compact metric space with the metric inherited from $(-\infty,\Lambda^*] \times C(\oO)$.  Moreover, after passing to a subsequence, we see that 
$X$ and the subsets $\cC_n \subset X$, $n \in \N$ satisfy the assumptions of Proposition~\ref{sec:appendix-1} from the Appendix. Indeed, let $z_n:= (\Lambda^*,u_n) \in \cC_n \subset X$ for $n \in \N$. Passing to a subsequence and using \eqref{eq:-local-compact}, we may then assume that $z_n \to z_* = (\Lambda^*,u_*)  \in X$. Moreover, since $\cC_n$ is a global branch in $[-\Lambda_{*,n},{\Lambda^{*}}] \times C(\oO)$ and $\Lambda_{*,n} \to \infty$ as $n \to \infty$, it follows that the sets $\bigcup_{n \ge m} \cC_n$, $m \in \N$ are not relatively compact in $X$. Consequently, Proposition~\ref{sec:appendix-1} implies that the connected component $\cC$ of $X$ which contains $z_*$ is not relatively compact. From \eqref{eq:-local-compact} and the fact that $z_* \in 
\{\Lambda^* \} \cap C(\oO)$, it then follows that $\cC$ is a global branch in $(-\infty, \Lambda^*] \times C(\oO)$, as desired.
\end{proof}

We conclude by completing the 

\begin{proof}[Proof of Theorem~\ref{main-theorem}]
Let $\cC$ be the global branch of positive solutions of \eqref{eq:K-equation} in $(-\infty,\Lambda^*] \times C(\oO)$ given by
Theorem~\ref{thm:continuation} and consider for each $(\lambda,u)\in \cC$ the canonical extension of $u$ given by \eqref{eq:extension-formula}
with $f=u^{p-1}$. Then $u\in L^p_{\text{loc}}(\R^N)$ solves \eqref{eqn:fp_u}. By Lemma~\ref{lem:reg1bis}(a) and Proposition~\ref{prop:l_infty}, $u$ is a strong solution of \eqref{eq:eq-general-schr-1}
in $W^{2,t}_{\text{loc}}(\R^N)\cap C^{1,\gamma}_{\text{loc}}(\R^N)$ for all $1\leq t<\infty$ and all $\gamma\in(0,1)$.
In addition, the positivity of $u$ on $\oO=\supp\, Q$ implies, together with Lemma~\ref{lem:bessel} (ii),
that $u=\Psi_\lambda\ast(Qu^{p-1})>0$ on $\R^N$, in the case where $\lambda\leq 0$.

Let now $s>\frac{2N}{N-1}$. To prove that $\cC$ is connected in $\R\times L^s(\R^N)$,
it suffices to show that 
\begin{equation}
  \label{eq:continuity-final-0}
\text{the map $\quad \R\times L^\infty(\Omega) \to L^s(\R^N)$, $\quad (\lambda,u)\mapsto \Psi_\lambda\ast(Q|u|^{p-2}u)\quad$ is continuous.}  
\end{equation}
Here the convolution is understood as in \eqref{eq:extension-formula}. Since the map $L^\infty(\Omega) \to L^\infty(\Omega), \: u \mapsto |u|^{p-2}u$ is continuous, \eqref{eq:continuity-final-0} follows once we have shown that 
\begin{equation}
  \label{eq:continuity-final}
\text{the map $\quad \R\times L^\infty(\R^N) \to L^s(\R^N)$, $\quad (\lambda,f)\mapsto \Psi_\lambda\ast(Qf)\quad$ is continuous.}  
\end{equation}
For $\lambda\in\R$ fixed we can find, as a consequence of Lemma~\ref{lem:bessel} (ii) and (v), constants $c_1, c_2>0$
such that
$$
|\Psi_\lambda(x)|\leq c_1|x|^{2-N}1_{B_1(0)}(x) + c_2|x|^{\frac{1-N}2}1_{\R^N\backslash B_1(0)}(x), \quad x\in\R^N.
$$
Therefore, the weak Young inequality gives
\begin{align*}
\|\Psi_\lambda &\ast (Qf)\|_s \leq c_1\|\, |\cdot|^{2-N}\|_{\frac{N}{N-2},w}\|Qf\|_{t_1}
+c_2\|\, |\cdot|^{\frac{1-N}{2}}\|_{\frac{2N}{N-1},w} \|Qf\|_{t_2}\\
&\leq \Bigl\{c_1\|\, |\cdot|^{2-N}\|_{\frac{N}{N-2},w} |\Omega|^{\frac1{t_1}}
+c_2\|\, |\cdot|^{\frac{1-N}{2}}\|_{\frac{2N}{N-1},w} |\Omega|^{\frac1{t_2}}\Bigr\}\|Q\|_\infty \|f\|_\infty
=:C(\lambda)\|f\|_\infty,
\end{align*}
where $\frac1{t_1}=\frac1s+\frac2N$ and $\frac1{t_2}=\frac1s+\frac{N+1}{2N}$. Notice that $t_1, t_2>1$, since
by assumption $s>\frac{2N}{N-1}$. Therefore, the linear map $L^\infty (\R^N) \to L^s(\R^N)$, 
$f\mapsto \Psi_\lambda\ast(Qf)$ is continuous for every $\lambda\in\R$.
On the other hand, using Young's inequality, we find
\begin{align*}
\|(\Psi_\mu-\Psi_\lambda)\ast(Qf)\|_s &\leq \|(\Psi_\mu-\Psi_\lambda)1_{B_1(0)}\|_1 \|Qf\|_s
+\|(\Psi_\mu-\Psi_\lambda)1_{\R^N\backslash B_1(0)}\|_s\|Qf\|_1\\
&\leq \Bigl\{\|\Psi_\mu-\Psi_\lambda\|_{L^1(B_1(0))} \|Q\|_s 
+\|\Psi_\mu-\Psi_\lambda\|_{L^s(\R^N\backslash B_1(0))}\|Q\|_1\Bigr\}\|f\|_\infty.
\end{align*}
Using Lemma~\ref{lem:bessel} and the dominated convergence theorem in the same way as Lemma~\ref{sec:glob-branch-posit-1},
it follows that $\|\Psi_\mu-\Psi_\lambda\|_{L^1(B_1(0))}\to 0$ and 
$\|\Psi_\mu-\Psi_\lambda\|_{L^s(\R^N\backslash B_1(0))}\to 0$ as $\mu\to\lambda$. 
Therefore, the map $\R\times L^\infty(\R^N)$ $\to$ $L^s(\R^N)$, $(\lambda,f)\mapsto\Psi_\lambda\ast(Qf)$
is continuous in $\lambda$, uniformly on bounded sets of $L^\infty(\R^N)$. Hence, writing
\begin{align*}
\|\Psi_\lambda\ast(Qf)-\Psi_\mu\ast(Qg)\|_s
&\leq \|\Psi_\lambda\ast(Qf-Qg)\|_s+\|(\Psi_\lambda-\Psi_\mu)\ast(Qg)\|_s,
\end{align*}
and using the above estimates, we obtain that 
$$\Psi_\mu\ast(Qg)\to \Psi_\lambda\ast(Qf) \qquad \text{in $L^s(\R^N)$ 
as $(\mu,g)\to(\lambda,f)$ in $\R\times L^\infty(\R^N)$,}
$$
which gives the continuity of the map in \eqref{eq:continuity-final}. 
As a consequence, for fixed $\Lambda^\ast \in (0,\lambda_Q)$, the global branch $\cC$ of Theorem~\ref{thm:continuation} translates to a global branch of solutions 
of \eqref{eq:eq-general-schr-1} in $(-\infty,\Lambda^\ast] \times L^s(\R^N)$  with the properties (i) and (ii) 
of Theorem~\ref{main-theorem}. 
In order to prove the property (iii), we show that every nontrivial solution $u\in L^p_{\text{loc}}(\R^N)$ of \eqref{eqn:fp_u} with $Q$ as in Theorem~\ref{main-theorem} and $0<\lambda<\lambda_Q$ changes sign on $\R^N$. 
Suppose by contradiction that for some $0<\lambda<\lambda_Q$ there exists a nontrivial nonnegative solution $u\in L^p_{\text{loc}}(\R^N)$ of \eqref{eqn:fp_u}. By Proposition~\ref{prop:l_infty}, $u\in W^{2,t}_{\text{loc}}(\R^N)\cap C^{1,\gamma}_{\text{loc}}(\R^N)$ for all $1\leq t<\infty$ and all $0<\gamma<1$. Therefore, by the strong maximum principle,
$u$ is a strong positive solution of \eqref{eq:eq-general-schr-1}. 
In particular, $\Delta u+\lambda u\leq 0$ on $\R^N$. Consider the function 
$$
\psi\in C^\infty(\R^N),\qquad \psi(x)=|x|^{\frac{2-N}{2}}J_{\frac{N-2}{2}}(\sqrt{\lambda}|x|), \quad \text{for $x\in\R^N$,}
$$
where $J_{\frac{N-2}{2}}$ denotes the Bessel function of the first kind of order $\frac{N-2}{2}$. This function satisfies $\Delta\psi+\lambda\psi=0$ on $\R^N$ and $\psi(x)=O(|x|^{\frac{1-N}{2}})$ as $|x|\to\infty$. By a result of Berestycki, Caffarelli and Nirenberg \cite[Theorem 1.8]{BCN97}, 
there is a constant $C\in\R$ such that $\psi=Cu$, but this is impossible
since $\psi$ changes sign on $\R^N$. This contradiction shows that every nontrivial solution of \eqref{eqn:fp_u}
with $0<\lambda<\lambda_Q$ must change sign on $\R^N$ and therefore the proof of Theorem~\ref{main-theorem} is complete.
\end{proof}

\section*{Appendix}
\renewcommand{\thesection}{A}\label{sec:appendix}
\setcounter{theorem}{0}
In this section we add a result from general topology which has been used in the proof of Theorem~\ref{thm:continuation}. It is a variant of a classical Lemma by Whyburn (see \cite[Theorem (9.1)]{whyburn}). For similar variants which have inspired the following proposition, see \cite{ma-an09,li-sun09}. 

\begin{proposition}
\label{sec:appendix-1}
Let $(X,d)$ be a locally compact metric space, $z_* \in X$, and let $\cC_n \subset X$, $n \in \N$ be connected subsets satisfying the following assumptions. 
\begin{itemize}
\item[(i)] There exist points $z_n \in \cC_n$, $n \in \N$ such that $z_n \to z_* \in X$ as $n \to \infty$.
\item [(ii)] The sets $\bigcup_{n \ge m} \cC_n$, $m \in \N$ are not relatively compact in $(X,d)$.
\end{itemize}
Then the connected component $\cC \subset X$ of $X$ which contains $z_*$ is not relatively compact. 
\end{proposition}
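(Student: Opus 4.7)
The plan is to argue by contradiction, following the strategy of Whyburn's classical lemma. Assume that the connected component $\cC$ of $X$ containing $z_*$ is relatively compact, so that $\overline{\cC}$ is compact. By local compactness of $X$, every point of $\overline{\cC}$ has an open neighborhood with compact closure; extracting a finite subcover we obtain an open set $U \subset X$ with $\overline{\cC}\subset U$ and $\overline{U}$ compact. The boundary $\partial U = \overline U \setminus U$ is then a closed subset of the compact Hausdorff space $\overline U$.

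Next, let $\tilde{\cC}$ denote the connected component of $z_*$ in the compact space $\overline U$. Since $\tilde \cC$ is a connected subset of $X$ containing $z_*$, by the very definition of the component $\cC$ we must have $\tilde \cC \subset \cC \subset U$; in particular $\tilde \cC \cap \partial U = \emptyset$. The key step is now to invoke the classical fact that in a compact Hausdorff space the connected component of a point coincides with its quasi-component. Applied to $\overline U$ and the disjoint pair of closed sets $\tilde \cC$ and $\partial U$, this yields a decomposition $\overline U = A \sqcup B$ into disjoint relatively clopen subsets with $\tilde \cC \subset A$ and $\partial U \subset B$. Because $A \cap \partial U = \emptyset$ we get $A \subset U$, and therefore $A$ is open in $X$ (as a relatively open subset of $\overline U$ contained in the open set $U$); since $A$ is also closed in $\overline U$, it is closed and even compact in $X$. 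Hence $A$ is a compact clopen neighborhood of $\cC$ in $X$.

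Finally I will combine (i), (ii), and the properties of $A$. Since $A$ is open and $z_n \to z_* \in \cC \subset A$, we have $z_n \in A$ for all sufficiently large $n$, say $n \ge m$. For each such $n$, $\cC_n \cap A$ is a nonempty clopen subset of the connected set $\cC_n$, so $\cC_n \subset A$. Consequently $\bigcup_{n \ge m} \cC_n \subset A$, and since $A$ is compact this union is relatively compact in $X$, contradicting hypothesis (ii). This contradiction proves that $\cC$ is not relatively compact.

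The main obstacle is the separation step in the second paragraph: one needs that in a compact Hausdorff (here compact metric) space, any two disjoint quasi-components, and in particular a component and a disjoint closed set, can be separated by a clopen partition. This is the nontrivial topological ingredient, but it is a standard result (e.g.\ \cite{whyburn}), and everything else in the argument is routine bookkeeping with local compactness and connectedness.
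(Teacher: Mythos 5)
Your proof is correct and follows essentially the same strategy as the paper: assume for contradiction that the (already closed) component $\cC$ is compact, use local compactness to place it inside a compact neighborhood, invoke a Whyburn-type separation theorem in a compact metric space to produce a compact set that is clopen in $X$ and contains $\cC$, and then combine hypotheses (i), (ii) with the connectedness of the $\cC_n$ to reach a contradiction. The only substantive deviation is the choice of separation lemma: you invoke the equality of components and quasi-components in compact Hausdorff spaces, whereas the paper cites Whyburn's separation theorem for a disjoint pair of closed sets with no component meeting both (stated as Lemma~\ref{sec:whyburn-1}) and then manufactures the clopen compact set as an intersection $V_2=V\cap V_1$ of two compact neighborhoods; your route of extracting the clopen set $A$ directly from the quasi-component decomposition is slightly more streamlined, but the two arguments are minor variants of the same classical idea and rest on equivalent topological facts.
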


For the proof of this proposition, we need the following well-known result (see \cite{whyburn}).

\begin{lemma}
\label{sec:whyburn-1}  
Suppose that $(X,d)$ is a compact metric space, $A$ and $B$ are disjoint closed subsets of $X$, and suppose that no  connected component of $X$ intersects both $A$ and $B$. Then there exist two disjoint compact subsets $X_A, X_B \subset X$ such that $A \subset X_A$,  $B \subset X_B$ and $X = X_A \cup X_B$. 
\end{lemma}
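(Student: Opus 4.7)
The plan is to produce a single clopen subset $V \subseteq X$ with $A \subseteq V$ and $V \cap B = \emptyset$. Once this is done, setting $X_A := V$ and $X_B := X \setminus V$ immediately yields two disjoint clopen (hence closed, hence compact) subsets of $X$ whose union is $X$ and which separate $A$ from $B$ as required. The whole problem thus reduces to constructing such a $V$, and I will obtain it by patching together finitely many clopen neighborhoods of points of $A$, each chosen to avoid $B$.

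The key ingredient is the classical identification, in a compact Hausdorff space, of the connected component $\cC_x$ of a point $x$ with its quasi-component $\cQ_x := \bigcap\{U \subseteq X : U \text{ clopen in } X,\ x \in U\}$. Granting this identification, fix $x \in A$: by hypothesis $\cC_x$ cannot meet both $A$ and $B$, and since $x \in \cC_x \cap A$ we conclude $\cC_x \cap B = \emptyset$, so $\cQ_x \cap B = \emptyset$ as well. Since $\cQ_x = \bigcap_U U$, the compact set $B \subseteq X \setminus \cQ_x$ is covered by the open family $\{X \setminus U : U \text{ clopen},\ x \in U\}$; extracting a finite subcover and intersecting the corresponding clopen sets produces a clopen set $U_x$ with $x \in U_x$ and $U_x \cap B = \emptyset$. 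The collection $\{U_x\}_{x \in A}$ is then an open cover of the compact set $A$; taking a finite subcover $U_{x_1},\dots,U_{x_m}$ and setting $V := U_{x_1} \cup \cdots \cup U_{x_m}$ yields the desired clopen separator.

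The remaining, and only nontrivial, task is to establish $\cC_x = \cQ_x$; this is the main obstacle. The inclusion $\cC_x \subseteq \cQ_x$ is immediate, since the connected set $\cC_x$ must lie inside every clopen set it meets. For the reverse inclusion I will show that $\cQ_x$ is itself connected. Assume for contradiction that $\cQ_x = K_1 \sqcup K_2$ with $K_1, K_2$ disjoint, nonempty, and closed in $\cQ_x$; since $\cQ_x$ is closed in $X$, both $K_i$ are closed in $X$, and we may take $x \in K_1$. By normality of the compact metric space $X$, choose disjoint open sets $W_1 \supset K_1$ and $W_2 \supset K_2$. The compact set $X \setminus (W_1 \cup W_2)$ is disjoint from $\cQ_x$, so by the same finite-intersection argument used above some clopen neighborhood $W$ of $x$ satisfies $W \subseteq W_1 \cup W_2$. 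Then $W \cap W_1 = W \cap (X \setminus W_2)$ is both open and closed in $X$, contains $x$, and is disjoint from $K_2$, contradicting $K_2 \subseteq \cQ_x$. Hence $\cQ_x$ is connected, so $\cQ_x \subseteq \cC_x$, and the proof is complete.
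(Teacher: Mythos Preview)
Your proof is correct. The paper does not actually supply a proof of this lemma: it is stated as a ``well-known result'' with a reference to Whyburn's \emph{Topological Analysis}, and the paper immediately proceeds to use it in the proof of Proposition~A.1. So there is no in-paper argument to compare against.

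That said, your argument is essentially the standard one found in Whyburn and elsewhere: identify, in a compact Hausdorff space, the connected component and the quasi-component of each point, and then use compactness twice---first to produce, for each $x\in A$, a clopen neighborhood $U_x$ missing $B$, and second to extract a finite union $V$ of such neighborhoods covering $A$. The verification that $\cQ_x$ is connected (via normality and a clopen refinement inside $W_1\cup W_2$) is clean and correct; in particular, the identity $W\cap W_1 = W\cap(X\setminus W_2)$ you use to see that $W\cap W_1$ is clopen is justified precisely because $W\subseteq W_1\cup W_2$ and $W_1\cap W_2=\varnothing$. Nothing is missing.
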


\begin{proof}[Proof of Proposition~\ref{sec:appendix-1}]
We suppose by contradiction that $\cC$ is relatively compact. 
Since, by definition, $\cC \subset X$ is closed, it follows that $\cC$ is compact. 
Since $X$ is locally compact, there exists a compact neighborhood $V \subset X$ of the set $\cC$. 
Then $\cC$ and $\partial V$ are non-intersecting closed subsets contained in the compact metric space $(V,d)$, and the maximal connectedness of $\cC$ implies that that there does not exist a connected component of $V$ which intersects 
$\cC$ and $\partial V$. 
By Lemma~\ref{sec:whyburn-1}, there exist disjoint compact subsets $X_A, X_B \subset V$ such that $\cC \subset X_A$,  $\partial V \subset X_B$ and 
\begin{equation}
  \label{eq:union-A-B}
V = X_A \cup X_B.  
\end{equation}
We may then choose a compact neighborhood $V_1 \subset X$ of $X_A$ such that 
$V_1 \cap X_B= \varnothing$, and we consider the compact set $V_2= V \cap V_1$. We have 
$$
\partial V_2 \subset 
[\partial V \cap V_1] \cup [\partial V_1 \cap  V] \subset V \cap V_1, 
$$
and thus it follows that 
$$
\partial V_2 \cap X_A= \varnothing = \partial V_2 \cap X_B.
$$
Consequently, 
\begin{equation}
  \label{eq:empty-intersect}
\partial V_2 = \varnothing  
\end{equation}
by \eqref{eq:union-A-B}, which implies in particular that $V_2$ is also open in $X$. 
On the other hand, since $z_* \in \cC \subset X_A \subset V_2$, there exists $n_0 \in \N$ such that 
$z_n \in V_2$ for $n \ge n_0$, which means that $\cC_n \cap V_2 \not = \varnothing$ for $n \ge n_0$. 
The connectedness of $\cC_n$ and \eqref{eq:empty-intersect} then imply that 
$$
\cC_n \subset V_2 \qquad  \text{for $n \ge n_0$,}
$$
but this contradicts assumption (ii) since $V_2$ is compact. Thus the proof is finished. 
\end{proof}

\section*{Acknowledgements}
\noindent This research was supported by Grant WE 2821/5-1 of the Deutsche Forschungsgemeinschaft.


\bibliographystyle{abbrv}

\end{document}